\newcommand{\A}{\mathcal{A}}
\newcommand{\C}{\mathcal{C}}
\newcommand{\T}{\mathcal{T}}
\newcommand{\ceq}{\mathrel{\mathop:}=}
\newcommand{\nang}{\mathscr{N}}
\newcommand{\susp}{\widehat{\Sigma}}
\DeclareMathOperator{\complete}{Comp}
\DeclareMathOperator{\dense}{Dense}
\DeclareMathOperator{\Hom}{Hom}
\DeclareMathOperator{\Image}{Im}
\DeclareMathOperator{\subgroup}{Sub}
\DeclareMathOperator{\ideal}{Ideal}
\DeclareMathOperator{\Prime}{Prime}
\theoremstyle{definition}
\newtheorem{definition}[subsection]{Definition}
\newtheorem{remark}[subsection]{Remark}
\theoremstyle{plain}
\newtheorem{theorem}[subsection]{Theorem}
\newtheorem{lemma}[subsection]{Lemma}
\newtheorem{proposition}[subsection]{Proposition}
\newtheorem{corollary}[subsection]{Corollary}
\title{The Grothendieck group of an $n$-angulated category}
\author{Petter Andreas Bergh}
\author{Marius Thaule}
\address{Department of Mathematical Sciences, NTNU, NO-7491
  Trondheim, Norway}
\email{bergh@math.ntnu.no}
\email{mariusth@math.ntnu.no}
\begin{document}

\begin{abstract}
  We define the Grothendieck group of an $n$-angulated category and
  show that for odd $n$ its properties are as in the special case of
  $n = 3$, i.e.\ the triangulated case.  In particular, its subgroups
  classify the dense and complete $n$-angulated subcategories via a
  bijective correspondence.  For a tensor $n$-angulated category, the
  Grothendieck group becomes a ring, whose ideals classify the dense
  and complete $n$-angulated tensor ideals of the category.
\end{abstract}

\thanks{The second author was partially supported by the Research
  Council of Norway grant Topology, grant number 185335/V30.}

\subjclass[2010]{18E30, 18F30}
\keywords{Triangulated categories, $n$-angulated categories,
  Grothendieck groups}

\maketitle

\section{Introduction}
\label{sec:intro}
The Grothendieck group of a triangulated category is the free abelian
group on the (set of) isomorphism classes of objects, modulo the Euler
relations corresponding to the distinguished triangles.  Thomason
showed in \cite{Thomason} that the set of subgroups of the
Grothendieck group classifies the dense triangulated subcategories.
Namely, there is a bijective correspondence between the set of
subgroups and the set of dense triangulated subcategories.

Recently, in \cite{GeissKellerOppermann}, Geiss, Keller and Oppermann
introduced ``higher dimensional'' analogues of triangulated
categories, called \emph{$n$-angulated categories}.  An $n$-angulated
category with $n = 3$ is nothing but a classical triangulated
category, and they showed that certain cluster tilting subcategories
of triangulated categories give rise to higher $n$-angulated
categories.

In this paper, we define and study the Grothendieck group of an
$n$-angulated category.  As in the triangulated case, it is the free
abelian group on the set of isomorphism classes of objects, modulo the
Euler relations corresponding to the $n$-angles.  Our main result
shows that when $n$ is odd, then the set of subgroups corresponds
bijectively to the complete and dense $n$-angulated subcategories,
thus providing a classification of these.  When $n = 3$, that is, in
the classical triangulated case, every triangulated subcategory is
complete, hence we recover Thomason's classification theorem. Our
proof of the classification result only works when $n$ is odd; we do
not know whether a similar result holds when $n$ is an even integer.
The underlying reason for this is that in the odd case, the additive
inverse of an element in the Grothendieck group is given by the
suspension of the corresponding object.  This is no longer true in the
even case.

We also define \emph{tensor $n$-angulated categories}, that is,
$n$-angulated categories with a symmetric tensor structure (or
symmetric monoidal structure) which is compatible with the
$n$-angulation.  For such a category, the Grothendieck group becomes a
ring in a natural way, the \emph{Grothendieck ring}.  We show that the
set of ideals in this ring classify the complete and dense
$n$-angulated tensor ideals of the category.

This paper is organized as follows.  In Section \ref{sec:gdieck}, we
recall the definition of an $n$-angulated category, define its
Grothendieck group, and prove some elementary properties.  In Section
\ref{sec:cluster}, we specialize to the case of an $n$-angulated
category arising from a cluster tilting subcategory of a triangulated
category.  We prove that there is a natural surjective homomorphism
from the Grothendieck group of the $n$-angulated category to the
Grothendieck group of the triangulated category.  In Section
\ref{sec:classifying}, we prove our main result, the classification
theorem which gives a bijective correspondence between subgroups and
complete and dense $n$-angulated subcategories.  Finally, in Section
\ref{sec:tensor} we define tensor $n$-angulated categories, and prove
the tensor version of the classification theorem.

\section{The Grothendieck group of an $n$-angulated category}
\label{sec:gdieck}
Throughout this paper, every category will be assumed to be small,
that is, the collection of isomorphism classes of objects forms a set.

We start by recalling the definition of an $n$-angulated category from
\cite{GeissKellerOppermann}.  Let $\C$ be an additive category with an
automorphism $\Sigma\colon \C \to \C$, and $n$ an integer greater than
or equal to three.  An \emph{$n$-$\Sigma$-sequence} in $\C$ is a
sequence
\begin{equation*}
  A_1 \xrightarrow{\alpha_1} A_2 \xrightarrow{\alpha_2} \cdots
  \xrightarrow{\alpha_{n - 1}} A_n \xrightarrow{\alpha_n} \Sigma A_1 
\end{equation*}
of objects and morphisms in $\C$.  We shall often denote such sequences
by $A_\bullet, B_\bullet$ etc.  Its left and right \emph{rotations} are
the two $n$-$\Sigma$-sequences
\begin{equation*}
  A_2 \xrightarrow{\alpha_2} A_3 \xrightarrow{\alpha_3} \cdots
  \xrightarrow{\alpha_n} \Sigma A_1 \xrightarrow{(-1)^n
    \Sigma\alpha_1} \Sigma A_2
\end{equation*}
and
\begin{equation*}
  \Sigma^{-1}A_n \xrightarrow{(-1)^n \Sigma^{-1} \alpha_n} A_1
  \xrightarrow{\alpha_1} \cdots \xrightarrow{\alpha_{n - 2}} A_{n-1}
  \xrightarrow{\alpha_{n - 1}} A_n
\end{equation*} 
respectively, and it is \emph{exact} if the induced sequence
\begin{equation*}
  \cdots \to \Hom_{\C}(B,A_1) \xrightarrow{(\alpha_1)_*}
  \Hom_{\C}(B,A_2) \xrightarrow{(\alpha_2)_*} \cdots
  \xrightarrow{(\alpha_{n-1})_*} \Hom_{\C}(B,A_n)
  \xrightarrow{(\alpha_n)_*} \Hom_{\C}(B, \Sigma A_1) \to \cdots
\end{equation*}
of abelian groups is exact for every object $B \in \C$. A
\emph{trivial} $n$-$\Sigma$-sequence is a sequence of the form
\begin{equation*}
  A \xrightarrow{1} A \to 0 \to \cdots \to 0 \to \Sigma A
\end{equation*}
or any of its rotations.  A \emph{morphism} $A_{\bullet}
\xrightarrow{\varphi} B_{\bullet}$ of $n$-$\Sigma$-sequences is a
sequence $\varphi = (\varphi_1,\varphi_2,\ldots,\varphi_n)$ of
morphisms in $\C$ such that the diagram

\begin{center}
  \begin{tikzpicture}
    \node (X1) at (0,1.5){$A_1$};
    \node (X2) at (1.5,1.5){$A_2$};
    \node (X3) at (3,1.5){$A_3$};
    \node (Xdots) at (4.5,1.5){$\cdots$};
    \node (Xn) at (6,1.5){$A_n$};
    \node (SX1) at (7.5,1.5){$\Sigma A_1$};

    \node (Y1) at (0,0){$B_1$};
    \node (Y2) at (1.5,0){$B_2$};
    \node (Y3) at (3,0){$B_3$};
    \node (Ydots) at (4.5,0){$\cdots$};
    \node (Yn) at (6,0){$B_n$};
    \node (SY1) at (7.5,0){$\Sigma B_1$};

    \begin{scope}[->,font=\scriptsize,midway]
      \draw (X1) -- node[right]{$\varphi_1$} (Y1);
      \draw (X2) -- node[right]{$\varphi_2$} (Y2);
      \draw (X3) -- node[right]{$\varphi_3$} (Y3);
      \draw (Xn) -- node[right]{$\varphi_n$} (Yn);
      \draw (SX1) -- node[right]{$\Sigma \varphi_1$} (SY1);

      \draw (X1) -- node[above]{$\alpha_1$} (X2);
      \draw (X2) -- node[above]{$\alpha_2$} (X3);
      \draw (X3) -- node[above]{$\alpha_3$} (Xdots);
      \draw (Xdots) -- node[above]{$\alpha_{n - 1}$} (Xn);
      \draw (Xn) -- node[above]{$\alpha_n$} (SX1);
      \draw (Y1) -- node[above]{$\beta_1$} (Y2);
      \draw (Y2) -- node[above]{$\beta_2$} (Y3);
      \draw (Y3) -- node[above]{$\beta_3$} (Ydots);
      \draw (Ydots) -- node[above]{$\beta_{n - 1}$} (Yn);
      \draw (Yn) -- node[above]{$\beta_n$} (SY1);
    \end{scope}
  \end{tikzpicture}
\end{center}
commutes.  It is an \emph{isomorphism} if
$\varphi_1,\varphi_2,\ldots,\varphi_n$ are all isomorphisms in $\C$,
and a \emph{weak isomorphism} if $\varphi_i$ and $\varphi_{i+1}$ are
isomorphisms for some $i$ (with $\varphi_{n+1} \ceq \Sigma \varphi_1$).

The category $\C$ is \emph{pre-$n$-angulated} if there exists a
collection $\nang$ of $n$-$\Sigma$-sequences satisfying the following
three axioms:
\begin{itemize}
\item[{\textbf{(N1)}}]
  \begin{itemize}
  \item[(a)] $\nang$ is closed under direct sums, direct summands and
    isomorphisms of $n$-$\Sigma$-sequences.
  \item[(b)] For all $A \in \C$, the trivial $n$-$\Sigma$-sequence
   \begin{equation*}
     A \xrightarrow{1} A \to 0 \to \cdots \to 0 \to \Sigma A
   \end{equation*}
   belongs to $\nang$.
 \item[(c)] For each morphism $\alpha \colon A_1 \to A_2$ in $\C$,
   there exists an $n$-$\Sigma$-sequence in $\nang$ whose first
   morphism is $\alpha$.
 \end{itemize}
\item[{\textbf{(N2)}}] An $n$-$\Sigma$-sequence belongs to $\nang$ if
  and only if its left rotation belongs to $\nang$.
\item[{\textbf{(N3)}}] Each commutative diagram
  \begin{center}
    \begin{tikzpicture}[text centered]
      \node (X1) at (0,1.5){$A_1$};
      \node (X2) at (1.5,1.5){$A_2$};
      \node (X3) at (3,1.5){$A_3$};
      \node (Xdots) at (4.5,1.5){$\cdots$};
      \node (Xn) at (6,1.5){$A_n$};
      \node (SX1) at (7.5,1.5){$\Sigma A_1$};

      \node (Y1) at (0,0){$B_1$};
      \node (Y2) at (1.5,0){$B_2$};
      \node (Y3) at (3,0){$B_3$};
      \node (Ydots) at (4.5,0){$\cdots$};
      \node (Yn) at (6,0){$B_n$};
      \node (SY1) at (7.5,0){$\Sigma B_1$};

      \begin{scope}[font=\scriptsize,->,midway]
        \draw (X1) -- node[right]{$\varphi_1$} (Y1);
        \draw (X2) -- node[right]{$\varphi_2$} (Y2);
        \draw[dashed] (X3) -- node[right]{$\varphi_3$} (Y3);
        \draw[dashed] (Xn) -- node[right]{$\varphi_n$} (Yn);
        \draw (SX1) -- node[right]{$\Sigma \varphi_1$} (SY1);
      
        \draw (X1) -- node[above]{$\alpha_1$} (X2);
        \draw (X2) -- node[above]{$\alpha_2$} (X3);
        \draw (X3) -- node[above]{$\alpha_3$} (Xdots);
        \draw (Xdots) -- node[above]{$\alpha_{n - 1}$} (Xn);
        \draw (Xn) -- node[above]{$\alpha_n$} (SX1);
        \draw (Y1) -- node[above]{$\beta_1$} (Y2);
        \draw (Y2) -- node[above]{$\beta_2$} (Y3);
        \draw (Y3) -- node[above]{$\beta_3$} (Ydots);
        \draw (Ydots) -- node[above]{$\beta_{n - 1}$} (Yn);
        \draw (Yn) -- node[above]{$\beta_n$} (SY1);
      \end{scope}
    \end{tikzpicture}
  \end{center}
  with rows in $\nang$ can be completed to a morphism of
  $n$-$\Sigma$-sequences.
\end{itemize}

In this case, the collection $\nang$ is a \emph{pre-$n$-angulation} of
the category $\C$ (relative to the automorphism $\Sigma$), and the
$n$-$\Sigma$-sequences in $\nang$ are \emph{$n$-angles}.  If, in
addition, the collection $\nang$ satisfies the following axiom, then
it is an \emph{$n$-angulation} of $\C$, and the category is
\emph{$n$-angulated}:

\begin{itemize}
\item[{\textbf{(N4)}}] In the situation of (N3), the morphisms
  $\varphi_3,\varphi_4,\ldots,\varphi_n$ can be chosen such that the
  mapping cone
  \begin{equation*}
    A_2 \oplus B_1 \xrightarrow{\left[
        \begin{smallmatrix}
          -\alpha_2 & 0\\
          \hfill \varphi_2 & \beta_1
        \end{smallmatrix}
      \right]} A_3 \oplus B_2 \xrightarrow{\left[
        \begin{smallmatrix}
          -\alpha_3 & 0\\
          \hfill \varphi_3 & \beta_2
        \end{smallmatrix}
      \right]} \cdots \xrightarrow{\left[
        \begin{smallmatrix} 
          -\alpha_n & 0\\
          \hfill \varphi_n & \beta_{n - 1}
        \end{smallmatrix}
      \right]} \Sigma A_1 \oplus B_n \xrightarrow{\left[
       \begin{smallmatrix}
          -\Sigma \alpha_1 & 0\\
          \hfill \Sigma \varphi_1 & \beta_n
        \end{smallmatrix}
      \right]} \Sigma A_2 \oplus \Sigma B_1
  \end{equation*}
belongs to $\nang$.
\end{itemize}

Note that the axioms given here are the original ones presented by
Geiss, Keller and Oppermann in \cite{GeissKellerOppermann}.  In
\cite{BerghThaule}, the authors gave a set of alternative axioms and
showed that they are equivalent to the original ones.  In particular,
it was shown that axiom (N4) is equivalent to a ``higher'' version of
Verdier's original octahedral axiom.

The construction and properties of Grothendieck groups do not require
axiom (N4).  Therefore, the theory we present in this paper is valid
for pre-$n$-angulated categories.  However, we have chosen to state
the definitions and results for $n$-angulated categories.

Having recalled the definition of an $n$-angulated category, we now
define the Grothendieck group.  As in the triangulated case, it is the
free abelian group on the set of isomorphism classes of objects modulo
the Euler relations corresponding to the $n$-angles.  It will be
convenient to have a shorter notation for these Euler relations.
Suppose therefore that $\C$ is an $n$-angulated category, and let
$F(\C)$ be the free abelian group on the set of isomorphism classes
$\langle A \rangle$ of objects $A$ in $\C$.  Given an $n$-angle
\begin{equation*}
  A_\bullet \colon \quad A_1 \to A_2 \to A_3 \to \cdots \to A_n \to
  \Sigma A_1
\end{equation*}
in $\C$, we denote the corresponding Euler relation in $F(\C)$ by
$\chi(A_\bullet)$, i.e.\
\begin{equation*}
  \chi(A_\bullet) \ceq \langle A_1 \rangle - \langle A_2 \rangle +
  \langle A_3 \rangle - \cdots + (-1)^{n + 1} \langle A_n \rangle.
\end{equation*}
These generate the relations in the Grothendieck group when $n$ is odd.
When $n$ is even, we also include the trivial relation $\langle 0
\rangle$, in order for the residue class of $\langle 0 \rangle$ to be
the zero element; see Proposition \ref{prop:elementary-properties}(1).

\begin{definition}
  \label{def:K0}
  Let $(\C,\Sigma)$ be an $n$-angulated category, and $F(\C)$
  the free abelian group on the set of isomorphism classes $\langle A
  \rangle$ of objects $A$ in $\C$.  Furthermore, let $R(\C)$ be the
  subgroup of $F(\C)$ generated by the following sets of elements
  \begin{equation*}
    \begin{array}{rl}
      \{\chi(A_\bullet) \mid A_\bullet \text{ $n$-angle in } \C \} &
      \text{if $n$ is odd}\\
      \{\langle 0 \rangle \} \cup \{\chi(A_\bullet) \mid A_\bullet
      \text{ $n$-angle in } \C\} & \text{if $n$ is even}
    \end{array}
  \end{equation*}
  in $\C$.  The \emph{Grothendieck group} $K_0(\C)$ of $\C$ is the
  quotient group $F(\C)/R(\C)$.  Given an object $A \in \C$, the
  residue class $\langle A \rangle + R(\C)$ in $K_0(\C)$ is denoted by
  $[A]$.
\end{definition}

Note that the definition of the Grothendieck group is the reason why
we are only considering small categories: the collection of
isomorphism classes in the category must form a set.  Note also that
it follows immediately from the definition that the Grothendieck group
of $\C$ is universal with respect to group homomorphisms from $F(\C)$
to abelian groups satisfying the Euler relations.  To be precise, let
$G$ be an abelian group and $f \colon F(\C) \to G$ a homomorphism such
that $f\left(\chi(A_\bullet)\right) = 0$ for all $n$-angles
$A_\bullet$ in $\C$, and such that $f( \langle 0 \rangle )=0$ when $n$
is even.  Then there exists a unique homomorphism $\hat{f} \colon
K_0(\C) \to G$ such that the diagram
\begin{center}
  \begin{tikzpicture}
    \node (X1) at (0,1.5){$F(\C)$};
    \node (X2) at (4,1.5){$G$};
    
    \node (Y1) at (2,0){$K_0(\C)$};
    
    \begin{scope}[font=\scriptsize,->,midway]
      \draw (X1) -- node[above]{$f$} (X2);
      \draw (X1.south east) -- node[above right]{$\pi$} (Y1.north
        west);
      \draw (Y1.north east) -- node[above left]{$\hat{f}$} (X2.south
        west);
    \end{scope}
  \end{tikzpicture}
\end{center}
commutes, where $\pi \colon F(\C) \to K_0(\C)$ is the natural
projection.

We now prove some elementary properties of the Grothendieck group,
properties which are well-known when $n = 3$, that is, when the
category is triangulated.  Note that for an arbitrary $n$, the
relation $[\Sigma A] = -[A]$ holds when $n$ is odd, whereas $[\Sigma
A] = [A]$ when $n$ is even.

\begin{proposition}
  \label{prop:elementary-properties}
  Let $(\C,\Sigma)$ be an $n$-angulated category, and $K_0(\C)$ its
  Grothendieck group.
  \begin{enumerate}
  \item The element $[0]$ is the zero element in $K_0(\C)$.
  \item If $A$ and $B$ are objects in $\C$ then $[A \oplus B] = [A] +
    [B]$ and $[\Sigma A] = (-1)^n[A]$ in $K_0(\C)$.
  \item When $n$ is odd, then every element in $K_0(\C)$ is of the
    form $[A]$ for some object $A \in \C$.  When $n$ is even, then
    every element in $K_0(\C)$ is of the form $[A]-[B]$ for some
    objects $A,B \in \C$.
  \end{enumerate}
\end{proposition}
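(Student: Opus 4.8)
The plan is to work entirely inside $F(\C)$ and track how the Euler relations in $R(\C)$ interact with the generators $\langle A\rangle$. For part (1), I would use axiom (N1)(b): the trivial $n$-angle $A \xrightarrow{1} A \to 0 \to \cdots \to 0 \to \Sigma A$ is an $n$-angle, and its Euler relation is
\begin{equation*}
  \chi = \langle A\rangle - \langle A\rangle + \langle 0\rangle - \cdots \pm \langle 0\rangle \pm \langle \Sigma A\rangle,
\end{equation*}
wait --- I should be careful about where the $\Sigma A$ term sits and which $0$'s appear; but the point is that after cancelling $\langle A\rangle - \langle A\rangle$ this relation expresses $\pm\langle\Sigma A\rangle$ as a combination of $\langle 0\rangle$'s (and for $A = 0$ it collapses to a multiple of $\langle 0\rangle$). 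When $n$ is odd this already forces $[0] = 0$ once one checks the signs; when $n$ is even we have thrown $\langle 0\rangle$ into $R(\C)$ by hand precisely so that $[0] = 0$. So the claim follows, with the even case being essentially by definition and the odd case needing the trivial $n$-angle on $A = 0$.

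For the two identities in part (2): the relation $[A\oplus B] = [A] + [B]$ comes from taking the direct sum of the trivial $n$-angles on $A$ and on $B$, which lies in $\nang$ by (N1)(a); its Euler relation (combined with part (1), so that the $\langle 0\rangle$ and $\langle 0\oplus 0\rangle$ terms vanish) gives $\langle A\oplus B\rangle - \langle \Sigma A \oplus \Sigma B\rangle \equiv 0$ or, more directly, one uses the trivial $n$-angle $A\oplus B \xrightarrow{1} A\oplus B \to 0 \to\cdots$ against the sum of the two individual trivial $n$-angles --- actually the cleanest route is: the sequence $A \xrightarrow{1} A \to 0 \to \cdots \to \Sigma A$ has Euler relation putting $[A] = \pm[\Sigma A]$ after reduction, so first establish $[\Sigma A] = (-1)^n[A]$. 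For that, consider any $n$-angle with $A_1 = A$; better, use rotation (axiom (N2)): applying the Euler relation to the trivial $n$-angle gives $\langle A\rangle + (-1)^{n+1}\langle \Sigma A\rangle \in R(\C)$ modulo $\langle 0\rangle$-terms, hence $[\Sigma A] = (-1)^n[A]$. Then $[A\oplus B] = [A] + [B]$ follows either from the direct sum of trivial $n$-angles or, most efficiently, from the $n$-angle $A \to A\oplus B \to B \to 0 \to \cdots \to \Sigma A$ (trivial, with $0$'s after $B$) --- one must check such a sequence is in $\nang$, which follows since it is (isomorphic to) a direct sum of a trivial $n$-angle on $A$, a rotation of a trivial $n$-angle on $B$, and trivial $n$-angles on $0$; then (N1)(a) applies and its Euler relation reads $\langle A\rangle - \langle A\oplus B\rangle + \langle B\rangle \equiv 0$.

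For part (3), I would argue by describing $K_0(\C)$ as generated by the $[A]$. Every element of $F(\C)$ is a $\mathbb{Z}$-linear combination $\sum_i m_i \langle A_i\rangle$; using $[A\oplus B] = [A]+[B]$ from part (2), a sum of positive-coefficient generators is $[\bigoplus A_i]$, so every element of $K_0(\C)$ has the form $[A] - [B]$ for suitable $A, B$ (absorbing negative coefficients into $B$). When $n$ is odd, $[\Sigma B] = (-1)^n[B] = -[B]$ by part (2), so $[A] - [B] = [A] + [\Sigma B] = [A\oplus \Sigma B]$, giving the sharper statement. The main obstacle, as the introduction flags, is precisely the sign bookkeeping in the odd-versus-even dichotomy: one must be scrupulous about the sign $(-1)^{n+1}$ in the definition of $\chi$ and about which of the $n-2$ middle terms of a trivial $n$-angle are zero objects, since an off-by-one there would break the identity $[\Sigma A] = (-1)^n[A]$ on which everything else rests. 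I expect parts (1) and (2) to be where all the real care goes, and part (3) to be a short formal consequence.
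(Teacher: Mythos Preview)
Your proposal is correct and matches the paper's argument: the all-zero $n$-angle for (1), the rotated trivial $n$-angle $A \to 0 \to \cdots \to 0 \to \Sigma A \xrightarrow{(-1)^n} \Sigma A$ for $[\Sigma A] = (-1)^n[A]$, the direct sum of the trivial $n$-angle on $A$ with the once-rotated trivial $n$-angle on $B$ for $[A\oplus B] = [A]+[B]$, and then (3) as a formal consequence exactly as you describe. Your hesitation about the $\Sigma A$ term is well-placed: in the unrotated trivial $n$-angle the final $\Sigma A$ is $\Sigma A_1$, not one of the vertices $A_1,\ldots,A_n$, so its Euler relation contains no $\langle\Sigma A\rangle$ term and the rotation you invoke is indeed required.
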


\begin{proof}
  (1) If $n$ is even, then $[0]$ is zero in $K_0(\C)$ by definition of
  $R(\C)$.  If $n$ is odd, then the Euler relation corresponding to
  the trivial $n$-angle
  \begin{equation*}
    0 \to 0 \to \cdots \to 0 \to \Sigma 0
  \end{equation*}
  gives that $[0]$ is zero in $K_0(\C)$.
 
  (2) The two $n$-$\Sigma$-sequences
  \begin{align*}
    & A \xrightarrow{1} A \to 0 \to 0 \to \cdots \to 0 \to \Sigma A\\
    & 0 \to B \xrightarrow{1} B \to 0 \to \cdots \to 0 \to 0
  \end{align*}
  are $n$-angles, hence so is their direct sum $S_\bullet$.  From (1),
  the Euler relation $\chi(S_\bullet)$ gives the equality $[A \oplus
  B] = [A] + [B]$ in $K_0(\C)$.  Moreover, the $n$-$\Sigma$-sequence
  \begin{equation*}
    T_\bullet \colon \quad A \to 0 \to 0 \to \cdots \to 0 \to \Sigma A
    \xrightarrow{(-1)^n} \Sigma A
  \end{equation*}
  is an $n$-angle.  From (1), the Euler relation $\chi(T_\bullet)$
  gives the equality $[A] + (-1)^{n+1}[\Sigma A] = 0$ in $K_0(\C)$.
       
  (3) Let $x$ be an element in $K_0(\C)$. If $x=0$ then $x = [0]$ by
  (1), and we are done.  If $x$ is nonzero, then there are
  non-negative integers $a_1, \dots, a_r, b_1, \dots, b_t$ and objects
  $A_1, \dots, A_r, B_1, \dots, B_t$ in $\C$ with
  \begin{equation*}
    x = a_1[A_1] + \cdots + a_r[A_r] - b_1[B_1] - \cdots - b_t[B_t].
  \end{equation*}
  Using (2), we see that
  \begin{equation*}
    x = \left\{
      \begin{array}{ll}
        [A_1^{a_1} \oplus \cdots \oplus A_r^{a_r} \oplus (\Sigma
        B_1)^{b_1} \oplus \cdots \oplus (\Sigma B_t)^{b_t}] &
        \text{when $n$ is odd}\\
        {[ A_1^{a_1} \oplus \cdots \oplus A_r^{a_r}]} - [B_1^{b_1}
        \oplus \cdots \oplus B_t^{b_t}] & \text{when $n$ is even}
      \end{array}
    \right.
  \end{equation*}
  in $K_0( \C )$, where $A^m$ denotes the direct sum of $m$ copies of
  an object $A \in \C$.
\end{proof}

The next result provides an alternative interpretation of the
Grothendieck group in the case when $n$ is odd.

\begin{proposition}
  \label{prop:description}
  Let $n \geq 3$ be an odd integer and $(\C,\Sigma)$ an $n$-angulated
  category.  Consider the following relation on the set of objects of
  $\C$: $A \sim B$ if and only if there exist objects $C_1, \dots,
  C_n$ and two $n$-angles
  \begin{equation*}
    A \oplus C_1 \xrightarrow{\alpha_1} C_2 \xrightarrow{\alpha_2}
    \cdots \xrightarrow{\alpha_{n-1}} C_n \xrightarrow{\alpha_n}
    \Sigma A \oplus \Sigma C_1
  \end{equation*}
  \begin{equation*}
    B \oplus C_1 \xrightarrow{\beta_1} C_2 \xrightarrow{\beta_2}
    \cdots \xrightarrow{\beta_{n-1}} C_n \xrightarrow{\beta_n} \Sigma
    B \oplus \Sigma C_1
  \end{equation*}
  in $\C$. Then the following hold:
  \begin{enumerate}
  \item The relation is an equivalence relation.
  \item The set $\pi$ of equivalence classes $\{A\}$ of objects in
    $\C$ forms an abelian group with addition $\{A\} + \{B\} = \{A
    \oplus B\}$.  The inverse of an element $\{A\}$ is $\{\Sigma A\}$.
  \item The groups $\pi$ and $K_0(\C)$ are isomorphic via the
    correspondence $\{A\} \leftrightarrow [A]$.
  \end{enumerate}
\end{proposition}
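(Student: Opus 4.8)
The plan is to verify the three parts in order, using the elementary properties of $K_0(\C)$ from Proposition~\ref{prop:elementary-properties} as the main tool, and then to construct the isomorphism with $K_0(\C)$ explicitly. For part (1), reflexivity is immediate by taking both $n$-angles to be the same one completing $A \xrightarrow{1} A$ (or more precisely, applying (N1)(c) to get an $n$-angle on $A \oplus C_1 \to C_2$ and using it twice); symmetry is built into the definition since the roles of $A$ and $B$ are interchangeable. The only real content is transitivity: given $A \sim B$ via objects $C_\bullet$ and $B \sim D$ via objects $C_\bullet'$, I would form the direct sums of the relevant $n$-angles. Adding (as $n$-angles, using (N1)(a)) the $n$-angle witnessing $A \sim B$ to the one witnessing $B \sim D$ with its $B$-terms, and similarly on the other side, should produce two $n$-angles on objects $C_i \oplus C_i' \oplus (\text{copies of } B \text{ and } \Sigma B)$ exhibiting $A \oplus (\text{junk}) \sim D \oplus (\text{same junk})$; one then absorbs the junk into the new middle objects. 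The bookkeeping here — making the first and last terms come out as $A \oplus (\text{something})$ and $\Sigma A \oplus \Sigma(\text{same something})$ — is the fiddly part, but it is purely formal manipulation of direct sums of $n$-angles.

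For part (2), I would first check that the addition $\{A\} + \{B\} = \{A \oplus B\}$ is well defined: if $A \sim A'$, then direct-summing the witnessing $n$-angles with the trivial $n$-angle $B \xrightarrow{1} B \to 0 \to \cdots \to 0 \to \Sigma B$ (or rather, with a fixed $n$-angle on $B \oplus C_1 \to \cdots$) shows $A \oplus B \sim A' \oplus B$. Associativity and commutativity of $\oplus$ up to isomorphism give the corresponding group laws on $\pi$. The identity element is $\{0\}$: the trivial $n$-angle shows $A \oplus 0 \cong A$, so $\{A\} + \{0\} = \{A\}$. For the inverse, I must show $\{A\} + \{\Sigma A\} = \{0\}$, i.e.\ $A \oplus \Sigma A \sim 0$; here I would exhibit an $n$-angle with first object $A \oplus \Sigma A \oplus C_1$ and last object $\Sigma A \oplus A \oplus \Sigma C_1$ (note $\Sigma(\Sigma^{-1} A) = A$, using that $\Sigma$ is an automorphism) — for instance, one built from the trivial angle $A \xrightarrow{1} A$ together with a rotated trivial angle producing a $\Sigma A$ in first position — and then the $n$-angle witnessing $0 \sim 0$, checking both have the same middle terms after suitable choices. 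Since $n$ is odd is essential here (the inverse being the suspension), this mirrors exactly the computation $[A] + [\Sigma A] = [A] + (-1)^{n+1}[\Sigma A] = 0$ from Proposition~\ref{prop:elementary-properties}(2).

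For part (3), I would define a map $\pi \to K_0(\C)$ by $\{A\} \mapsto [A]$ and a map $K_0(\C) \to \pi$ using the universal property. That the first is well defined: if $A \sim B$, the two witnessing $n$-angles give Euler relations $[A] + [C_1] - [C_2] + \cdots + (-1)^{n+1}([\Sigma A] + [\Sigma C_1]) = 0$ and the same with $B$; subtracting and using $[\Sigma X] = (-1)^n[X] = -[X]$ (as $n$ is odd) yields $2[A] - 2[A] \cdots$ — more carefully, each equation collapses to $[A] + [C_1] - [C_2] + \cdots - [\Sigma A] - [\Sigma C_1] = [A] + [C_1] - \cdots + [A] + [C_1] = 0$ so that the $C_i$ contributions are identical in both and hence $[A] = [B]$. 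It is a group homomorphism by Proposition~\ref{prop:elementary-properties}(2). For the inverse direction, I define $f\colon F(\C) \to \pi$ by $\langle A\rangle \mapsto \{A\}$; this kills every Euler relation $\chi(A_\bullet)$ because an $n$-angle $A_1 \to \cdots \to A_n \to \Sigma A_1$ directly shows $A_1 \oplus A_3 \oplus \cdots \sim A_2 \oplus A_4 \oplus \cdots$ (collecting odd- and even-indexed terms, after noting $\Sigma A_1$ sits at the last position and using the inverse law), so that $\sum (-1)^{i+1}\{A_i\} = 0$ in $\pi$; hence it factors through $K_0(\C)$ by the universal property, giving $\hat f\colon K_0(\C) \to \pi$. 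Finally $\hat f$ and $\{A\}\mapsto[A]$ are mutually inverse on generators, since by Proposition~\ref{prop:elementary-properties}(3) every element of $K_0(\C)$ is $[A]$ for some $A$, and $\{A\} \mapsto [A] \mapsto \{A\}$. The main obstacle throughout is the direct-sum bookkeeping in transitivity and in the proof that Euler relations are killed — keeping track of which objects play the role of the auxiliary $C_i$ — but no step requires anything beyond the axioms (N1)–(N2) and the already-established arithmetic in $K_0(\C)$.
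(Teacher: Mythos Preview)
Your plan is correct and follows essentially the same route as the paper: direct sums of $n$-angles for transitivity, trivial $n$-angles to witness $A \oplus \Sigma A \sim 0$, and the key step of showing that Euler relations vanish in $\pi$ by exhibiting $\bigoplus_{i\ \text{odd}} A_i \sim \bigoplus_{i\ \text{even}} A_i$ via carefully chosen direct sums of trivial $n$-angles (the paper then builds only the single map $K_0(\C)\to\pi$ and checks injectivity directly, rather than constructing both inverses as you do). One small correction: the Euler relation $\chi(A_\bullet)$ involves only the first $n$ vertices and not $\Sigma A_1$, so in your well-definedness check the relation is simply $[A]+[C_1]-[C_2]+\cdots+(-1)^{n+1}[C_n]=0$, and subtracting the analogous relation for $B$ gives $[A]=[B]$ immediately --- your extra ``$-[\Sigma A]-[\Sigma C_1]$'' terms do not belong there.
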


\begin{proof}
  (1) The relation is clearly reflexive and symmetric.  Suppose that
  $A \sim B$ and $B \sim D$.  Then by definition, there exist four
  $n$-angles
  \begin{equation*}
    A \oplus C_1 \xrightarrow{\alpha_1} C_2 \xrightarrow{\alpha_2}
    \cdots \xrightarrow{\alpha_{n-1}} C_n \xrightarrow{\alpha_n}
    \Sigma A \oplus \Sigma C_1
  \end{equation*}
  \begin{equation*}
    B \oplus C_1 \xrightarrow{\beta_1} C_2 \xrightarrow{\beta_2}
    \cdots \xrightarrow{\beta_{n-1}} C_n \xrightarrow{\beta_n} \Sigma
    B \oplus \Sigma C_1
  \end{equation*}
  \begin{equation*}
    B \oplus C'_1 \xrightarrow{\gamma_1} C'_2 \xrightarrow{\gamma_2}
    \cdots \xrightarrow{\gamma_{n-1}} C'_n \xrightarrow{\gamma_n}
    \Sigma B \oplus \Sigma C'_1
  \end{equation*}
  \begin{equation*}
    D \oplus C'_1 \xrightarrow{\delta_1} C'_2 \xrightarrow{\delta_2}
    \cdots \xrightarrow{\delta_{n-1}} C'_n \xrightarrow{\delta_n}
    \Sigma D \oplus \Sigma C'_1
  \end{equation*}
  in $\C$.  Now form two new $n$-angles from these: the direct sum of
  the first and the third, and the direct sum of the second and the
  fourth.  These two $n$-angles show that $A \sim D$, hence the
  relation is also transitive.  Consequently, the relation is an
  equivalence relation.

  (2) The operation $\{A\} + \{B\} = \{A \oplus B\}$ in $\pi$ is a
  well-defined associative binary operation, with $\{0\}$ as the
  identity element.  Now for any object $A \in \C$, consider the two
  $n$-angles
  \begin{center}
    \begin{tikzpicture}[text centered]
      \node (X1) at (0,1){$A \oplus \Sigma A$};
      \node (X2) at (1.5,1){$\Sigma A$};
      \node (X3) at (3,1){$\Sigma A$};
      \node (X4) at (4.5,1){$\Sigma A$};
      \node (Xdots) at (6,1){$\cdots$};
      \node (Xn) at (7.5,1){$\Sigma A$};
      \node (SX1) at (9,1){$\Sigma A \oplus \Sigma^2 A$};

      \node (Y1) at (0,0){$0$};
      \node (Y2) at (1.5,0){$\Sigma A$};
      \node (Y3) at (3,0){$\Sigma A$};
      \node (Y4) at (4.5,0){$\Sigma A$};
      \node (Ydots) at (6,0){$\cdots$};
      \node (Yn) at (7.5,0){$\Sigma A$};
      \node (SY1) at (9,0){$0$};

      \begin{scope}[font=\scriptsize,->,midway]
        \draw (X1) -- node[above]{$\left[
            \begin{smallmatrix}
              0 & 1
            \end{smallmatrix}
          \right]$} (X2);
        \draw (X2) -- node[above]{$0$} (X3);
        \draw (X3) -- node[above]{$1$} (X4);
        \draw (X4) -- node[above]{$0$} (Xdots);
        \draw (Xdots) -- node[above]{$0$} (Xn);
        \draw (Xn) -- node[above]{$\left[
            \begin{smallmatrix}
              1 \\ 0
            \end{smallmatrix}
          \right]$} (SX1);

        \draw (Y1) -- node[above]{} (Y2);
        \draw (Y2) -- node[above]{$1$} (Y3);
        \draw (Y3) -- node[above]{$0$} (Y4);
        \draw (Y4) -- node[above]{$1$} (Ydots);
        \draw (Ydots) -- node[above]{$1$} (Yn);
        \draw (Yn) -- node[above]{} (SY1);
      \end{scope}
    \end{tikzpicture}
  \end{center}
  in $\C$, each built from trivial $n$-angles.  They show that $A
  \oplus \Sigma A \sim 0$, so that
  \begin{equation*}
    \{A\} + \{\Sigma A\} = \{A \oplus \Sigma A\} = \{0\}
  \end{equation*}
  in $\pi$.  Hence $\pi$ is an abelian group: the inverse of an
  element $\{A\}$ is $\{\Sigma A\}$.

  (3) We first show that the Euler relations hold in the group $\pi$.
  Suppose that
  \begin{equation*}
    A_{\bullet} \colon \quad A_1 \xrightarrow{\alpha_1} A_2
    \xrightarrow{\alpha_2} \cdots \xrightarrow{\alpha_{n-1}} A_n
    \xrightarrow{\alpha_n} \Sigma A_1
  \end{equation*}
  is an $n$-angle in $\C$.  Take the direct sum of $A_{\bullet}$ and
  the following trivial $n$-angles:
  \begin{center}
    \begin{tikzpicture}[text centered]
      \node (X1) at (0,1){$A_i$};
      \node (X2) at (1,1){$A_i$};
      \node (X3) at (2,1){$0$};
      \node (X4) at (3,1){$0$};
      \node (Xdots) at (4,1){$\cdots$};
      \node (Xn) at (5,1){$0$};
      \node (SX1) at (6,1){$\Sigma A_i$};
      
      \node (Xindex) at (8,1){$i = 3,5, \dots, n$};

      \node (Y1) at (0,0){$0$};
      \node (Y2) at (1,0){$A_i$};
      \node (Y3) at (2,0){$A_i$};
      \node (Y4) at (3,0){$0$};
      \node (Ydots) at (4,0){$\cdots$};
      \node (Yn) at (5,0){$0$};
      \node (SY1) at (6,0){$0$};
      
      \node (Xindex) at (8,0){$i = 4,6, \dots, n-1$};
      
      \node (Z1) at (0,-1){$0$};
      \node (Z2) at (1,-1){$0$};
      \node (Z3) at (2,-1){$A_i$};
      \node (Z4) at (3,-1){$A_i$};
      \node (Zdots) at (4,-1){$\cdots$};
      \node (Zn) at (5,-1){$0$};
      \node (SZ1) at (6,-1){$0$};
      
      \node (Zindex) at (8,-1){$i = 5,7 \dots, n$};
      
      \node (Udots) at (3,-1.4){$\vdots$};
      
      \node (V1) at (0,-2){$0$};
      \node (V2) at (1,-2){$0$};
      \node (Vdots) at (2,-2){$\cdots$};
      \node (Vn-4) at (3,-2){$0$};
      \node (Vn-3) at (4.2,-2){$A_{n-1}$};
      \node (Vn-2) at (5.5,-2){$A_{n-1}$};
      \node (Vn-1) at (6.5,-2){$0$};
      \node (Vn) at (7.5,-2){$0$};
      \node (SV1) at (8.5,-2){$0$};
      
      \node (W1) at (0,-3){$0$};
      \node (W2) at (1,-3){$0$};
      \node (Wdots) at (2,-3){$\cdots$};
      \node (Wn-4) at (3,-3){$0$};
      \node (Wn-3) at (4.2,-3){$0$};
      \node (Wn-2) at (5.5,-3){$A_{n}$};
      \node (Wn-1) at (6.5,-3){$A_n$};
      \node (Wn) at (7.5,-3){$0$};
      \node (SW1) at (8.5,-3){$0$};

      \begin{scope}[font=\scriptsize,->,midway]
        \draw (X1) -- node[above]{$1$} (X2);
        \draw (X2) -- node[above]{} (X3);
        \draw (X3) -- node[above]{} (X4);
        \draw (X4) -- node[above]{} (Xdots);
        \draw (Xdots) -- node[above]{} (Xn);
        \draw (Xn) -- node[above]{} (SX1);
        
        \draw (Y1) -- node[above]{} (Y2);
        \draw (Y2) -- node[above]{$1$} (Y3);
        \draw (Y3) -- node[above]{} (Y4);
        \draw (Y4) -- node[above]{} (Ydots);
        \draw (Ydots) -- node[above]{} (Yn);
        \draw (Yn) -- node[above]{} (SY1);
        
        \draw (Z1) -- node[above]{} (Z2);
        \draw (Z2) -- node[above]{} (Z3);
        \draw (Z3) -- node[above]{$1$} (Z4);
        \draw (Z4) -- node[above]{} (Zdots);
        \draw (Zdots) -- node[above]{} (Zn);
        \draw (Zn) -- node[above]{} (SZ1);
        
        \draw (V1) -- node[above]{} (V2);
        \draw (V2) -- node[above]{} (Vdots);
        \draw (Vdots) -- node[above]{} (Vn-4);
        \draw (Vn-4) -- node[above]{} (Vn-3);
        \draw (Vn-3) -- node[above]{$1$} (Vn-2);
        \draw (Vn-2) -- node[above]{} (Vn-1);
        \draw (Vn-1) -- node[above]{} (Vn);
        \draw (Vn) -- node[above]{} (SV1);
        
        \draw (W1) -- node[above]{} (W2);
        \draw (W2) -- node[above]{} (Wdots);
        \draw (Wdots) -- node[above]{} (Wn-4);
        \draw (Wn-4) -- node[above]{} (Wn-3);
        \draw (Wn-3) -- node[above]{} (Wn-2);
        \draw (Wn-2) -- node[above]{$1$} (Wn-1);
        \draw (Wn-1) -- node[above]{} (Wn);
        \draw (Wn) -- node[above]{} (SW1);
              
      \end{scope}
    \end{tikzpicture}
  \end{center}
  and write the odd integer $n$ as $n=2t+1$.  The result is an
  $n$-angle of the form
  \begin{center}
    \begin{tikzpicture}[text centered]
      \node (X1) at (0,0){$\bigoplus\limits_{i=0}^t A_{2i+1}$};
      \node (X2) at (2,0){$\bigoplus\limits_{i=2}^n A_i$};
      \node (X3) at (4,0){$\bigoplus\limits_{i=3}^n A_i$};
      \node (Xdots) at (6,0){$\cdots$};
      \node (Xn-1) at (8,0){$A_{n-1} \oplus A_n$};
      \node (Xn) at (10,0){$A_n$};
      \node (SX1) at (12,0){$\bigoplus\limits_{i=0}^t \Sigma
        A_{2i+1}$};

      \begin{scope}[font=\scriptsize,->,midway]
        \draw (X1) -- node[above]{} (X2);
        \draw (X2) -- node[above]{} (X3);
        \draw (X3) -- node[above]{} (Xdots);
        \draw (Xdots) -- node[above]{} (Xn-1);
        \draw (Xn-1) -- node[above]{} (Xn);
        \draw (Xn) -- node[above]{} (SX1);

      \end{scope}
    \end{tikzpicture}
  \end{center}
  in $\C$.  Now take the direct sum of the following trivial
  $n$-angles:
  \begin{center}
    \begin{tikzpicture}[text centered]
      \node (X1) at (0,1){$A_i$};
      \node (X2) at (1,1){$A_i$};
      \node (X3) at (2,1){$0$};
      \node (X4) at (3,1){$0$};
      \node (Xdots) at (4,1){$\cdots$};
      \node (Xn) at (5,1){$0$};
      \node (SX1) at (6,1){$\Sigma A_i$};
      
      \node (Xindex) at (8,1){$i = 2,4, \dots, n-1$};

      \node (Y1) at (0,0){$0$};
      \node (Y2) at (1,0){$A_i$};
      \node (Y3) at (2,0){$A_i$};
      \node (Y4) at (3,0){$0$};
      \node (Ydots) at (4,0){$\cdots$};
      \node (Yn) at (5,0){$0$};
      \node (SY1) at (6,0){$0$};
      
      \node (Xindex) at (8,0){$i = 3,5, \dots, n$};
      
      \node (Z1) at (0,-1){$0$};
      \node (Z2) at (1,-1){$0$};
      \node (Z3) at (2,-1){$A_i$};
      \node (Z4) at (3,-1){$A_i$};
      \node (Zdots) at (4,-1){$\cdots$};
      \node (Zn) at (5,-1){$0$};
      \node (SZ1) at (6,-1){$0$};
      
      \node (Zindex) at (8,-1){$i = 4,6 \dots, n-1$};
      
      \node (Udots) at (3,-1.4){$\vdots$};
      
      \node (V1) at (0,-2){$0$};
      \node (V2) at (1,-2){$0$};
      \node (Vdots) at (2,-2){$\cdots$};
      \node (Vn-3) at (3,-2){$0$};
      \node (Vn-2) at (4.2,-2){$A_{n-1}$};
      \node (Vn-1) at (5.5,-2){$A_{n-1}$};
      \node (Vn) at (6.5,-2){$0$};
      \node (SV1) at (7.5,-2){$0$};
      
      \node (W1) at (0,-3){$0$};
      \node (W2) at (1,-3){$0$};
      \node (Wdots) at (2,-3){$\cdots$};
      \node (Wn-3) at (3,-3){$0$};
      \node (Wn-2) at (4.2,-3){$0$};
      \node (Wn-1) at (5.5,-3){$A_{n}$};
      \node (Wn) at (6.5,-3){$A_n$};
      \node (SW1) at (7.5,-3){$0$};

      \begin{scope}[font=\scriptsize,->,midway]
        \draw (X1) -- node[above]{$1$} (X2);
        \draw (X2) -- node[above]{} (X3);
        \draw (X3) -- node[above]{} (X4);
        \draw (X4) -- node[above]{} (Xdots);
        \draw (Xdots) -- node[above]{} (Xn);
        \draw (Xn) -- node[above]{} (SX1);
        
        \draw (Y1) -- node[above]{} (Y2);
        \draw (Y2) -- node[above]{$1$} (Y3);
        \draw (Y3) -- node[above]{} (Y4);
        \draw (Y4) -- node[above]{} (Ydots);
        \draw (Ydots) -- node[above]{} (Yn);
        \draw (Yn) -- node[above]{} (SY1);
        
        \draw (Z1) -- node[above]{} (Z2);
        \draw (Z2) -- node[above]{} (Z3);
        \draw (Z3) -- node[above]{$1$} (Z4);
        \draw (Z4) -- node[above]{} (Zdots);
        \draw (Zdots) -- node[above]{} (Zn);
        \draw (Zn) -- node[above]{} (SZ1);
        
        \draw (V1) -- node[above]{} (V2);
        \draw (V2) -- node[above]{} (Vdots);
        \draw (Vdots) -- node[above]{} (Vn-3);
        \draw (Vn-3) -- node[above]{} (Vn-2);
        \draw (Vn-2) -- node[above]{$1$} (Vn-1);
        \draw (Vn-1) -- node[above]{} (Vn);
        \draw (Vn) -- node[above]{} (SV1);
        
        \draw (W1) -- node[above]{} (W2);
        \draw (W2) -- node[above]{} (Wdots);
        \draw (Wdots) -- node[above]{} (Wn-3);
        \draw (Wn-3) -- node[above]{} (Wn-2);
        \draw (Wn-2) -- node[above]{} (Wn-1);
        \draw (Wn-1) -- node[above]{$1$} (Wn);
        \draw (Wn) -- node[above]{} (SW1);
      \end{scope}
    \end{tikzpicture}
  \end{center}
  The result is an $n$-angle of the form
  \begin{center}
    \begin{tikzpicture}[text centered]
      \node (X1) at (0,0){$\bigoplus\limits_{i=1}^t A_{2i}$};
      \node (X2) at (2,0){$\bigoplus\limits_{i=2}^n A_i$};
      \node (X3) at (4,0){$\bigoplus\limits_{i=3}^n A_i$};
      \node (Xdots) at (6,0){$\cdots$};
      \node (Xn-1) at (8,0){$A_{n-1} \oplus A_n$};
      \node (Xn) at (10,0){$A_n$};
      \node (SX1) at (12,0){$\bigoplus\limits_{i=1}^t \Sigma A_{2i}$};

      \begin{scope}[font=\scriptsize,->,midway]
        \draw (X1) -- node[above]{} (X2);
        \draw (X2) -- node[above]{} (X3);
        \draw (X3) -- node[above]{} (Xdots);
        \draw (Xdots) -- node[above]{} (Xn-1);
        \draw (Xn-1) -- node[above]{} (Xn);
        \draw (Xn) -- node[above]{} (SX1);

      \end{scope}
    \end{tikzpicture}
  \end{center}
  in $\C$.  By definition of the equivalence relation, the two
  $n$-angles show that
  \begin{equation*}
    A_1 \oplus A_3 \oplus \cdots \oplus A_n \sim A_2 \oplus A_4 \oplus
    \cdots \oplus A_{n-1},
  \end{equation*}
  so that the relation
  \begin{equation*}
    \{A_1\} + \{A_3\} + \cdots + \{A_n\} = \{A_2\} + \{A_4\} + \cdots
    + \{A_{n-1}\}
  \end{equation*}
  holds in the group $\pi$.  But this is precisely the Euler relation
  $\Sigma_{i=1}^n (-1)^{i+1} \{A_i\} = 0$ corresponding to the
  $n$-angle $A_{\bullet}$ we started with.

  Since the Euler relations hold in $\pi$, the map $f \colon K_0 (\C)
  \to \pi$ given by $[A] \mapsto \{A\}$ is a well-defined surjective
  homomorphism of abelian groups.  To show injectivity, suppose that
  $f([A]) = 0$.  Then $A \sim 0$, and so there exist objects $C_1,
  \dots, C_n$ and two $n$-angles
  \begin{equation*}
    A \oplus C_1 \xrightarrow{\alpha_1} C_2 \xrightarrow{\alpha_2}
    \cdots \xrightarrow{\alpha_{n-1}} C_n \xrightarrow{\alpha_n}
    \Sigma A \oplus \Sigma C_1
  \end{equation*}
  \begin{equation*}
    C_1 \xrightarrow{\beta_1} C_2 \xrightarrow{\beta_2} \cdots
    \xrightarrow{\beta_{n-1}} C_n \xrightarrow{\beta_n} \Sigma C_1
  \end{equation*}
  in $\C$.  Combining the Euler relations corresponding to these two
  $n$-angles gives $[A] = 0$ in $K_0(\C)$, hence the map $f$ is an
  isomorphism.
\end{proof}

We end this section with the following corollary to Proposition
\ref{prop:description}; it follows immediately from the definition of
the isomorphism between the Grothendieck group and the group of
equivalence classes.  It provides a criterion for determining when two
elements in the Grothendieck group are equal.  For $n = 3$ this is
\cite[Lemma 2.4]{Thomason}, which in turn was based on \cite[1.6,
Corollary]{Landsburg}.

\begin{corollary}\label{cor:equality}
  Let $n \geq 3$ be an odd integer and $(\C,\Sigma)$ an $n$-angulated
  category.  Then for any objects $A,B \in \C$ the following are
  equivalent:
  \begin{enumerate}
  \item $[A] = [B]$ in $K_0(\C)$.
  \item There exist objects $C_1, \dots, C_n$ and two $n$-angles
    \begin{equation*}
      A \oplus C_1 \xrightarrow{\alpha_1} C_2 \xrightarrow{\alpha_2}
      \cdots \xrightarrow{\alpha_{n-1}} C_n \xrightarrow{\alpha_n}
      \Sigma A \oplus \Sigma C_1
    \end{equation*}
    \begin{equation*}
      B \oplus C_1 \xrightarrow{\beta_1} C_2 \xrightarrow{\beta_2}
      \cdots \xrightarrow{\beta_{n-1}} C_n \xrightarrow{\beta_n}
      \Sigma B \oplus \Sigma C_1
    \end{equation*}
    in $\C$.
  \end{enumerate}
\end{corollary}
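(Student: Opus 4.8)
The plan is to obtain this as a direct consequence of Proposition~\ref{prop:description}. By part~(3) of that proposition, the correspondence $[A] \leftrightarrow \{A\}$ is an isomorphism of abelian groups between $K_0(\C)$ and the group $\pi$ of $\sim$-equivalence classes. Since an isomorphism is in particular injective and surjective, for objects $A, B \in \C$ one has $[A] = [B]$ in $K_0(\C)$ if and only if $\{A\} = \{B\}$ in $\pi$.

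The second step is purely a matter of unwinding definitions. Two equivalence classes $\{A\}$ and $\{B\}$ coincide in $\pi$ exactly when $A \sim B$, and by the very definition of the relation $\sim$ given in the statement of Proposition~\ref{prop:description}, this happens if and only if there exist objects $C_1, \dots, C_n$ and two $n$-angles
\[
  A \oplus C_1 \xrightarrow{\alpha_1} C_2 \xrightarrow{\alpha_2} \cdots \xrightarrow{\alpha_{n-1}} C_n \xrightarrow{\alpha_n} \Sigma A \oplus \Sigma C_1
\]
\[
  B \oplus C_1 \xrightarrow{\beta_1} C_2 \xrightarrow{\beta_2} \cdots \xrightarrow{\beta_{n-1}} C_n \xrightarrow{\beta_n} \Sigma B \oplus \Sigma C_1
\]
in $\C$, which is precisely condition~(2). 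Combining the two steps yields the equivalence of~(1) and~(2).

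I do not expect any real obstacle here, since everything needed was already proved in Proposition~\ref{prop:description}. The only point that requires a moment's care is that the isomorphism supplied there is the specific one sending $[A]$ to $\{A\}$, so that it transports the elements $[A]$ and $[B]$ to $\{A\}$ and $\{B\}$ respectively, rather than to classes of some other objects; as this is exactly the form in which Proposition~\ref{prop:description}(3) is stated, no further argument is needed. If one preferred a more self-contained presentation, the implication $(2) \Rightarrow (1)$ can also be seen directly by writing down the Euler relations of the two displayed $n$-angles and subtracting them, using that $n$ is odd so that the coefficient of $\langle C_n \rangle$ is $+1$ and the $C_i$-contributions cancel, while $(1) \Rightarrow (2)$ is the content of the injectivity argument in the proof of Proposition~\ref{prop:description}(3).
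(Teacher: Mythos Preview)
Your proof is correct and matches the paper's approach exactly: the paper states that the corollary ``follows immediately from the definition of the isomorphism between the Grothendieck group and the group of equivalence classes,'' which is precisely what you have spelled out.
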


\section{Example: cluster tilting categories}
\label{sec:cluster}
The first class of examples of $n$-angulated categories appeared in
\cite{GeissKellerOppermann}, namely, those that arise from certain
cluster tilting subcategories of triangulated categories.  In this
section, we show that there always exists a surjective homomorphism
from the Grothendieck group of such an $n$-angulated category onto the
Grothendieck group of the underlying triangulated category.

We recall the construction of these $n$-angulated categories.  Let
$\T$ be a triangulated category with suspension $\Sigma$, and $\C$ a
full subcategory.  Recall that a morphism $C \xrightarrow{f} X$ in
$\T$ is a \emph{right $\C$-approximation} of the object $X$ if the
following hold: the object $C$ belongs to $\C$, and for every morphism
$C' \xrightarrow{g} X$ with $C' \in \C$ there exists a morphism $C'
\xrightarrow{h} C$ such that $g = f \circ h$.
\begin{center}
  \begin{tikzpicture}[text centered]
    \node (C) at (-2,0){$C$};
    \node (X) at (0,0){$X$};
    \node (C') at (0,1.5){$C'$};
           
    \begin{scope}[->,font=\scriptsize,midway]
            
      \draw (C) -- node[above]{$f$} (X);
      \draw (C') -- node[right]{$g$} (X);
      \draw[dashed] (C') -- node[above left]{$h$} (C);
    \end{scope}
  \end{tikzpicture}
\end{center}
A \emph{left $\C$-approximation} of $X$ is defined dually.  The
subcategory $\C$ is called \emph{contravariantly finite} in $\T$ if
every object in $\T$ admits a right $\C$-approximation, and
\emph{covariantly finite} in $\T$ if every object in $\T$ admits a
left $\C$-approximation.  If $\C$ is both contravariantly and
covariantly finite in $\T$, then it is called \emph{functorially
  finite}.  Finally, if $t \ge 2$ is an integer, then the subcategory
$\C$ is called a \emph{$t$-cluster tilting} subcategory of $\T$ if the
following hold:
\begin{enumerate}
\item $\C$ is functorially finite in $\T$.
\item $\C$ is the full subcategory of $\T$ given by
  \begin{align*}
    \C & = \{A \in \T \mid \Hom_{\T}(A, \Sigma^iC) = 0 \text{ for } 1
    \leq i \leq t - 1 \text{ and for all } C \in \C\}\\
    & = \{B \in \T \mid \Hom_{\T}(\Sigma^iC,B) = 0 \text{ for } 1 \leq
    i \leq t - 1 \text{ and for all } C \in \C\}.
\end{align*}
\end{enumerate}

Suppose now that the subcategory $\C$ is $(n - 2)$-cluster tilting for
some integer $n$, and that $\C$ is closed under the automorphism
$\Sigma^{n-2}$ of $\T$; we denote $\Sigma^{n-2}$ by $\susp$.  Let
$\nang$ be the collection of $n$-$\susp$-sequences in $\C$ such that
there exists a diagram
\begin{center}
  \begin{tikzpicture}[text centered]
    \node (A2) at (1.5,1.5){$A_2$};
    \node (A3) at (4.5,1.5){$A_3$};
    \node (Adots) at (5.5,1.5){$\cdots$};
    \node (An-2) at (6.5,1.5){$A_{n-2}$};
    \node (An-1) at (9.5,1.5){$A_{n-1}$};
    
    \node (A1) at (0,0){$A_1$};
    \node (X1) at (3,0){$X_1$};
    \node (X2) at (6,0){$X_2$};
    \node (Xdots) at (7,0){$\cdots$};
    \node (Xn-3) at (8,0){$X_{n-3}$};
    \node (An) at (11,0){$A_n$};
    
    \node (T1) at (1.5,0.8){$\Delta$};
    \node (T2) at (4.5,0.8){$\Delta$};
    \node (Tn-2) at (9.5,0.8){$\Delta$};
    
    \begin{scope}[->,font=\scriptsize,midway]
      \draw (A1) -- node[above left]{$\alpha_1$} (A2);
      \draw (A2) -- node[above right]{$f_1$} (X1);
      \draw (X1) -- node[above left]{$g_1$} (A3);
      \draw (A3) -- node[above right]{$f_2$} (X2);
      \draw (An-2) -- node[above right]{$f_{n-3}$} (Xn-3);
      \draw (Xn-3) -- node[above left]{$g_{n-3}$} (An-1);
      \draw (An-1) -- node[above right]{$\alpha_{n-1}$} (An);
      
      \draw (A2) -- node[above]{$\alpha_2$} (A3);
      \draw (An-2) -- node[above]{$\alpha_{n-2}$} (An-1);
      \draw[|->] (X1) -- node[above]{$\partial_{n-2}$} (A1);
      \draw[|->] (X2) -- node[above]{$\partial_{n-3}$} (X1);
      \draw[|->] (An) -- node[above]{$\partial_1$} (Xn-3);     
    \end{scope}
  \end{tikzpicture}
\end{center}
in $\T$ with the following properties:
\begin{enumerate}
\item Each diagram triangle $\Delta$ is a triangle in $\T$, where a
  map $X \mapsto Y$ denotes a map from $X$ to $\Sigma Y$.
\item The other diagram triangles commute.
\item The map $\alpha_n$ equals the composition
  $\Sigma^{n-3} \partial_{n-2} \circ \Sigma^{n-4} \partial_{n-3} \circ
  \cdots \circ \partial_1$.
\end{enumerate}
Then it is shown in \cite[Section 3, Theorem 1]{GeissKellerOppermann}
that $(\C,\susp)$ is an $n$-angulated category, with the collection
$\nang$ as $n$-angles.

The following result shows that, in the above situation, the
``obvious'' map from from the Grothendieck group of $\C$ to that of
$\T$ is a surjective homomorphism of groups.

\begin{theorem}
  \label{thm:cluster}
  Let $(\T,\Sigma)$ be a triangulated category with an
  $(n-2)$-cluster-tilting subcategory $\C$ which is closed under
  $\Sigma^{n-2}$.  Furthermore, let $(\C, \susp)$ be the corresponding
  $n$-angulated category, where $\susp = \Sigma^{n-2}$. Then the map
  \begin{equation*}
    \begin{array}{r@{\ }c@{\ }l}
      K_0(\C) & \to & K_0(\T)\\
      {[A]-[B]} & \mapsto & [A]-[B]
    \end{array}
  \end{equation*}
  is a well-defined surjective homomorphism of groups. 
\end{theorem}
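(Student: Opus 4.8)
The plan is to exhibit the map as induced by a well-chosen homomorphism on free groups, check it kills the relations, and then verify surjectivity separately. First I would define $\phi \colon F(\C) \to K_0(\T)$ on the generators by $\langle A \rangle \mapsto [A]$, where on the right $A$ is regarded as an object of $\T$ (this makes sense since $\C$ is a full subcategory of $\T$, and isomorphic objects in $\C$ are isomorphic in $\T$, so $\phi$ is well defined on isomorphism classes). By the universal property of $F(\C)$ this extends uniquely to a group homomorphism. To factor it through $K_0(\C)$, I must check that $\phi$ annihilates the subgroup $R(\C)$: when $n$ is even this includes $\langle 0 \rangle$, and indeed $\phi(\langle 0 \rangle) = [0] = 0$ in $K_0(\T)$; the remaining generators are the Euler relations $\chi(A_\bullet)$ for $n$-angles $A_\bullet$ in $\C$.

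The heart of the argument is therefore: \emph{if $A_\bullet \colon A_1 \to A_2 \to \cdots \to A_n \to \susp A_1$ is an $n$-angle in $\C$, then $\sum_{i=1}^n (-1)^{i+1}[A_i] = 0$ in $K_0(\T)$.} Here I would use the explicit description of the $n$-angles recalled above: such an $n$-angle comes with a diagram in $\T$ consisting of $(n-2)$ genuine triangles
\begin{equation*}
  A_2 \to X_1 \to A_3 \to \susp A_2, \quad A_3 \to X_2 \to A_4 \to \susp A_3, \quad \dots
\end{equation*}
(with the first and last triangles involving $A_1$ and $A_n$ respectively, via $A_1 \to A_2 \to X_1 \to \Sigma A_1$ and $X_{n-3} \to A_{n-1} \to A_n \to \Sigma X_{n-3}$, up to the shift bookkeeping in properties (1)--(3)), glued along commuting triangles. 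Each genuine triangle $U \to V \to W \to \Sigma U$ in $\T$ gives the relation $[V] = [U] + [W]$ in $K_0(\T)$. I would write down these $n-2$ relations and add them with alternating signs; the intermediate objects $X_1, \dots, X_{n-3}$ appear twice with opposite signs and cancel, and what survives is exactly $\sum_{i=1}^n (-1)^{i+1}[A_i] = 0$ (using $[\susp A_1] = [\Sigma^{n-2} A_1] = (-1)^{n-2}[A_1] = (-1)^n [A_1]$ in $K_0(\T)$ to match the boundary term). This bookkeeping — keeping the signs and the shifts straight, and confirming that the telescoping really does leave the alternating sum of the $[A_i]$ — is the main obstacle; it is routine but must be done carefully, and it is cleanest to induct on $n$ (or on the number of triangles), splicing off one triangle at a time. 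Once this is established, $\phi$ descends to a homomorphism $K_0(\C) \to K_0(\T)$, which by Proposition \ref{prop:elementary-properties}(3) sends $[A] - [B]$ to $[A] - [B]$; this also shows the formula in the statement is the honest description of the map (well-definedness of the recipe $[A]-[B] \mapsto [A]-[B]$ being subsumed).

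Finally, surjectivity. Since every class in $K_0(\T)$ is a $\mathbb{Z}$-combination of classes $[X]$ with $X \in \T$, it suffices to show each $[X]$ lies in the image. As $\C$ is contravariantly finite in $\T$, choose a right $\C$-approximation $C_0 \to X$ and complete it to a triangle $X' \to C_0 \to X \to \Sigma X'$ in $\T$, so $[X] = [C_0] - [X']$. Iterating (each time taking a right $\C$-approximation of the new shifted object) and using the $(n-2)$-cluster-tilting property — which guarantees that after finitely many, in fact $n-2$, such steps the final syzygy object again lies in $\C$ (this is precisely the resolution statement underlying the construction of the $n$-angulation) — one obtains an expression $[X] = \sum_j (-1)^j [C_j]$ with all $C_j \in \C$, exhibiting $[X]$ as the image of $\sum_j (-1)^j [C_j] \in K_0(\C)$. (Alternatively one can quote that the functor $\T \to \C$-resolutions gives such a presentation; but the approximation argument is self-contained.) This completes the proof.
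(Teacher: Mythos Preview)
Your plan matches the paper's proof essentially step for step: define the map on $F(\C)$, verify the Euler relations via the $(n-2)$ triangles underlying an $n$-angle and a telescoping computation, then use finite $\C$-resolutions for surjectivity. Two small corrections to your write-up: the intermediate triangles in $\T$ are $X_{i-1} \to A_{i+1} \to X_i \to \Sigma X_{i-1}$ (with $X_0 = A_1$, $X_{n-2} = A_n$, and ordinary $\Sigma$, not $\susp$), not the sequences $A_2 \to X_1 \to A_3 \to \susp A_2$ you display; and the $\C$-resolution has length $n-3$ (i.e.\ $n-3$ triangles, as in \cite[5.5, Proposition]{KellerReiten}), not $n-2$. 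With those fixed the telescoping goes through exactly as in the paper, and no appeal to $[\susp A_1] = (-1)^n[A_1]$ is needed.
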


\begin{remark}
  (1) When $n$ is an odd integer, then we know from Proposition
  \ref{prop:elementary-properties} that every element in $K_0 ( \C )$
  is just of the form $[A]$ for some object $A \in \C$.  It is to be
  understood that in this case, the map sends $[A]$ in $K_0(\C)$ to
  $[A]$ in $K_0(\T)$.

  (2) Using Corollary \ref{cor:equality}, it is possible to
  characterize the kernel of the surjective homomorphism $K_0(\C) \to
  K_0(\T)$.  However, this characterization is not very effective,
  since it uses triangles in $\T$.
\end{remark}

\begin{proof}
  Let $F( \C )$ be the free abelian group on the set of isomorphism
  classes $\langle A \rangle$ of objects $A \in \C$, and $f \colon F (
  \C ) \to K_0( \T )$ the group homomorphism given by
  \begin{equation*}
    a_1 \langle A_1 \rangle + \cdots + a_s \langle A_s \rangle - b_1
    \langle B_1 \rangle - \cdots - b_t \langle B_t \rangle \mapsto a_1
    [A_1] + \cdots + a_s [A_s] - b_1 [B_1] - \cdots - b_t [B_t].
  \end{equation*}
  In order to show that $f$ extends to a homomorphism $\hat{f} \colon
  K_0(\C) \to K_0(\T)$, we must prove that $f$ maps the subgroup
  $R(\C)$ in Definition \ref{def:K0} to zero.  In other words, we must
  show that for every $n$-angle in $(\C,\susp)$, the corresponding
  Euler relation holds in $K_0(\T)$.

  Let therefore 
  \begin{equation*}
    A_1 \xrightarrow{\alpha_1} A_2 \xrightarrow{\alpha_2} \cdots
    \xrightarrow{\alpha_{n-1}} A_n \xrightarrow{\alpha_n} \susp A_1
  \end{equation*}
  be an $n$-angle in $(\C,\susp)$.  By definition, this $n$-angle is
  built from triangles in $(\T,\Sigma)$, that is, there exists a
  diagram
  \begin{center}
    \begin{tikzpicture}[text centered]
      \node (A2) at (1.5,1.5){$A_2$};
      \node (A3) at (4.5,1.5){$A_3$};
      \node (Adots) at (5.5,1.5){$\cdots$};
      \node (An-2) at (6.5,1.5){$A_{n-2}$};
      \node (An-1) at (9.5,1.5){$A_{n-1}$};
      
      \node (A1) at (0,0){$A_1$};
      \node (X1) at (3,0){$X_1$};
      \node (X2) at (6,0){$X_2$};
      \node (Xdots) at (7,0){$\cdots$};
      \node (Xn-3) at (8,0){$X_{n-3}$};
      \node (An) at (11,0){$A_n$};
      
      \node (T1) at (1.5,0.8){$\Delta$};
      \node (T2) at (4.5,0.8){$\Delta$};
      \node (Tn-2) at (9.5,0.8){$\Delta$};
      
      \begin{scope}[->,font=\scriptsize,midway]
        \draw (A1) -- node[above left]{$\alpha_1$} (A2);
        \draw (A2) -- node[above right]{$f_1$} (X1);
        \draw (X1) -- node[above left]{$g_1$} (A3);
        \draw (A3) -- node[above right]{$f_2$} (X2);
        \draw (An-2) -- node[above right]{$f_{n-3}$} (Xn-3);
        \draw (Xn-3) -- node[above left]{$g_{n-3}$} (An-1);
        \draw (An-1) -- node[above right]{$\alpha_{n-1}$} (An);
        
        \draw (A2) -- node[above]{$\alpha_2$} (A3);
        \draw (An-2) -- node[above]{$\alpha_{n-2}$} (An-1);
        \draw[|->] (X1) -- node[above]{$\partial_{n-2}$} (A1);
        \draw[|->] (X2) -- node[above]{$\partial_{n-3}$} (X1);
        \draw[|->] (An) -- node[above]{$\partial_1$} (Xn-3);     
      \end{scope}
    \end{tikzpicture}
  \end{center}
  in $\T$ with the three properties given prior to the theorem.  In
  particular, the $n - 2$ diagram triangles $\Delta$ are triangles in
  $(\T, \Sigma )$.  The Euler relations in $K_0 ( \T )$ therefore give
  \begin{eqnarray*}
    0 & = & [A_1] - [A_2] + [X_1]\\
    & = & [A_1] - [A_2] + [A_3] - [X_2]\\
    & \vdots &\\
    & = & [A_1] - [A_2] + [A_3] - \cdots + (-1)^{n+1} [A_n],
  \end{eqnarray*}
  so that $f$ maps the subgroup $R(\C)$ of $F(\C)$ to zero in
  $K_0(\T)$.  Consequently, the homomorphism $f$ extends to a
  homomorphism $\hat{f} \colon K_0(\C) \to K_0(\T)$, and the latter is
  precisely the homomorphism given in the theorem.

  It remains to show that $\hat{f}$ is surjective. Let therefore $[X]$
  be an element in $K_0(\T)$. By \cite[5.5,Proposition]{KellerReiten},
  since $\C$ is an $(n - 2)$-cluster-tilting subcategory of $\T$,
  every object in $\T$ admits a $\C$-resolution of length
  $n-3$.  Consequently, there exist $n-3$ triangles
  \begin{center}
    \begin{tikzpicture}[text centered]
      \node (X1) at (0,2){$X_1$};
      \node (X2) at (1.5,2){$A_0$};
      \node (X3) at (3,2){$X$};
      \node (SX1) at (4.5,2){$\Sigma X_1$};
      
      \node (Y1) at (0,1){$X_2$};
      \node (Y2) at (1.5,1){$A_1$};
      \node (Y3) at (3,1){$X_1$};
      \node (SY1) at (4.5,1){$\Sigma X_2$};
      
      \node (D1) at (0,0.5){$\vdots$};
      \node (D2) at (1.5,0.5){$\vdots$};
      \node (D3) at (3,0.5){$\vdots$};
      \node (SD1) at (4.5,0.5){$\vdots$};
      
      \node (Z1) at (0,-0.2){$X_{n-4}$};
      \node (Z2) at (1.5,-0.2){$A_{n-5}$};
      \node (Z3) at (3,-0.2){$X_{n-5}$};
      \node (SZ1) at (4.5,-0.2){$\Sigma X_{n-4}$};
      
      \node (W1) at (0,-1.2){$A_{n-3}$};
      \node (W2) at (1.5,-1.2){$A_{n-4}$};
      \node (W3) at (3,-1.2){$X_{n-4}$};
      \node (SW1) at (4.5,-1.2){$\Sigma A_{n-3}$};
      
      \begin{scope}[font=\scriptsize,->,midway]      
        \draw (X1) -- node[above]{} (X2);
        \draw (X2) -- node[above]{} (X3);
        \draw (X3) -- node[above]{} (SX1);
        
        \draw (Y1) -- node[above]{} (Y2);
        \draw (Y2) -- node[above]{} (Y3);
        \draw (Y3) -- node[above]{} (SY1);
        
        \draw (Z1) -- node[above]{} (Z2);
        \draw (Z2) -- node[above]{} (Z3);
        \draw (Z3) -- node[above]{} (SZ1);
        
        \draw (W1) -- node[above]{} (W2);
        \draw (W2) -- node[above]{} (W3);
        \draw (W3) -- node[above]{} (SW1);
      \end{scope}
    \end{tikzpicture}
  \end{center}
  in $\T$, with $A_i \in \C$ for all $i$. The Euler relations in
  $K_0(\C)$ now gives
  \begin{eqnarray*}
    [X] & = & [A_0] - [X_1]\\
    & = & [A_0] - [A_1] + [X_2]\\
    & \vdots &\\
    & = & [A_0] - [A_1] + [A_2] - \cdots + (-1)^{n-3} [A_{n-3}],
  \end{eqnarray*}
  hence $[X] = \hat{f} \left(\Sigma_{i=0}^{n-3} (-1)^i [A_i]\right)$.
  This shows that the homomorphism $\hat{f}$ is surjective.
\end{proof}

\section{Classifying subcategories}
\label{sec:classifying} 
In this section we prove a generalized version of Thomason's
classification theorem \cite[Theorem 2.1]{Thomason}.  Thomason's
theorem states that the subgroups of the Grothendieck group of a
triangulated category correspond bijectively to the so-called dense
triangulated subcategories.  We generalize this to $n$-angulated
categories, for $n$ odd.  The reason why our proof does not work when
$n$ is even is very simple: in this case the relation $[\Sigma C] = -
[C]$ does not hold in the Grothendieck group (as it does when $n$ is
odd), but $[\Sigma C] = [C]$ instead.

We start by defining $n$-angulated subcategories of an $n$-angulated
category $(\C,\Sigma)$.  In general, if $(\C',\Sigma')$ is another
$n$-angulated category, then a functor $L \colon \C \to \C'$ is
\emph{$n$-angulated} if it has the following three properties:
\begin{enumerate}
\item $L$ is additive.
\item There exists a natural isomorphism $\eta \colon L \circ \Sigma
  \to \Sigma' \circ L$.
\item $L$ preserves $n$-angles: if 
  \begin{equation*}
    A_1 \xrightarrow{\alpha_1} A_2 \xrightarrow{\alpha_2} \cdots
    \xrightarrow{\alpha_{n - 1}} A_n \xrightarrow{\alpha_n} \Sigma A_1
  \end{equation*}
  is an $n$-angle in $\C$, then
  \begin{equation*}
    L(A_1) \xrightarrow{L( \alpha_1)} L(A_2) \xrightarrow{L(
      \alpha_2)} \cdots \xrightarrow{L( \alpha_{n - 1})} L(A_n)
    \xrightarrow{\eta \circ L(\alpha_n)} \Sigma' L(A_1) 
  \end{equation*}
  is an $n$-angle in $\C'$.
\end{enumerate}
As in the triangulated case, the key requirement of an $n$-angulated
subcategory is that the inclusion functor is $n$-angulated.

\begin{definition}
  \label{def:fullsub}
  Let $(\C,\Sigma)$ be an $n$-angulated category.  
  \begin{enumerate}
  \item An \emph{$n$-angulated subcategory} of $\C$ is a full
    subcategory $\A$ such that $(\A,\Sigma)$ is $n$-angulated, closed
    under isomorphisms and the inclusion functor $\iota \colon \A \to
    \C$ is an $n$-angulated functor.
  \item A subcategory $\A$ of $\C$ is \emph{dense} in $\C$ if the
    following holds: every object in $\C$ is a direct summand of an
    object in $\A$.
   \item A subcategory $\A$ of $\C$ is \emph{complete} if the
     following holds: given an $n$-angle in $\C$ in which $n-1$ of the
     vertices are in $\A$, then the last vertex is an object in $\A$.
  \end{enumerate}
\end{definition}

Note that in the triangulated case, i.e.\ when $n = 3$, then every
triangulated subcategory of a triangulated category is complete.  This
is the well-known ``two out of three'' property for triangulated
subcategories, and this was how triangulated subcategories were
originally defined (cf.\ \cite[1.1]{Thomason} and \cite[\S 1, no.\ 1,
2-3]{Verdier}).  The reason is that, up to isomorphism, there is only
one way to complete a given map to a triangle (using axiom (N1)(c)), a
consequence of the Five Lemma.  Thus, when $\C$ is a triangulated
category and $\A$ is a triangulated subcategory, then the triangles in
$\A$ are characterized as follows: a $3$-$\Sigma$-sequence in $\A$ (or
equivalently, a $3$-$\Sigma$-sequence in $\C$ with all its objects in
$\A$) is a triangle in $\A$ if and only if it is a triangle in $\C$.
As the following lemma shows, this holds for general $n$-angulated
categories.

\begin{lemma}
  \label{lem:sequences}
  Let $(\C,\Sigma)$ be an $n$-angulated category, and $(\A,\Sigma)$ an
  $n$-angulated subcategory.  Then an $n$-$\Sigma$-sequence in $\A$ is
  an $n$-angle in $\A$ if and only if it is an $n$-angle in $\C$.
\end{lemma}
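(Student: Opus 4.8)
The statement has two directions. The easy one is: if an $n$-$\Sigma$-sequence $A_\bullet$ in $\A$ is an $n$-angle in $\A$, then it is an $n$-angle in $\C$ — this is immediate from the fact that the inclusion $\iota\colon\A\to\C$ is an $n$-angulated functor (Definition \ref{def:fullsub}(1)), since such a functor preserves $n$-angles by property (3) of an $n$-angulated functor (the natural isomorphism $\eta$ is the identity here because $\Sigma$ is the same automorphism on both categories, so $\iota(\alpha_n)$ is literally $\alpha_n$). The content is in the converse: an $n$-$\Sigma$-sequence in $\A$ that is an $n$-angle in $\C$ must already be an $n$-angle in $\A$.

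For the converse, let $A_\bullet\colon A_1\xrightarrow{\alpha_1}A_2\to\cdots\to A_n\xrightarrow{\alpha_n}\Sigma A_1$ be an $n$-$\Sigma$-sequence with all $A_i\in\A$, and suppose it is an $n$-angle in $\C$. Since $\A$ is $n$-angulated, by axiom (N1)(c) applied inside $\A$ there is an $n$-angle $B_\bullet\colon A_1\xrightarrow{\alpha_1}B_2\to\cdots\to B_n\to\Sigma A_1$ in $\A$ with the same first morphism $\alpha_1$. By the first (easy) direction, $B_\bullet$ is also an $n$-angle in $\C$. Now I apply axiom (N3) in $\C$ to the identity morphism $A_1\to A_1$ together with $\mathrm{id}\colon A_2\to B_2$ — wait, more carefully: I use (N3) to complete the partial morphism $(\mathrm{id}_{A_1},\mathrm{id}_{A_2})$ from $B_\bullet$ to $A_\bullet$ (both have first map $\alpha_1$, so the square over $\alpha_1$ commutes) to a morphism $\varphi\colon B_\bullet\to A_\bullet$ of $n$-$\Sigma$-sequences in $\C$, with $\varphi_1=\varphi_2=\mathrm{id}$. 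Then $\varphi$ is a weak isomorphism (two consecutive components are isomorphisms). The key input is now a standard fact about $n$-angulated categories: a weak isomorphism between $n$-angles is automatically an isomorphism — this is the $n$-angulated analogue of the "five lemma" argument, and should be available either as a cited result from \cite{GeissKellerOppermann} or \cite{BerghThaule}, or provable directly from the exactness of the sequences of Hom-groups (each $A_\bullet$ and $B_\bullet$ is exact, being an $n$-angle). Granting this, $\varphi$ is an isomorphism of $n$-$\Sigma$-sequences.

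So $A_\bullet$ is isomorphic, in $\C$, to the $n$-angle $B_\bullet$ of $\A$, via an isomorphism $\varphi$ whose every component lies in $\A$ (each $\varphi_i\colon B_i\to A_i$ is a morphism in the full subcategory $\A$, and being an isomorphism in $\C$ it is an isomorphism in $\A$). Therefore $\varphi\colon B_\bullet\to A_\bullet$ is an isomorphism of $n$-$\Sigma$-sequences entirely within $\A$, and since $\A$ is $n$-angulated its collection of $n$-angles is closed under isomorphisms by axiom (N1)(a). Hence $A_\bullet$ is an $n$-angle in $\A$, which finishes the proof.

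**Main obstacle.** The only nontrivial step is the claim that a weak isomorphism of $n$-angles is an isomorphism. If the paper does not already have this on hand, I would insert a short proof: given $\varphi\colon B_\bullet\to A_\bullet$ with $\varphi_i,\varphi_{i+1}$ isomorphisms, apply $\Hom_\C(-,Z)$ and $\Hom_\C(Z,-)$ to both long exact Hom-sequences, use the five lemma on the resulting ladders of abelian groups to conclude that each $\varphi_j$ induces isomorphisms on all representable functors, and then invoke Yoneda to deduce that each $\varphi_j$ is an isomorphism in $\C$. This is routine but is where all the real work sits; everything else is bookkeeping with the axioms (N1)(a), (N1)(c), (N3) and the definition of an $n$-angulated functor.
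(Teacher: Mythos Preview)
Your proposal is correct and follows essentially the same route as the paper: complete $\alpha_1$ to an $n$-angle $B_\bullet$ in $\A$ via (N1)(c), compare $B_\bullet$ and $A_\bullet$ in $\C$ via (N3) to obtain a weak isomorphism $\varphi$ with $\varphi_1=\varphi_2=\mathrm{id}$, and then conclude. The only difference is in how the final step is packaged: the paper observes that $A_\bullet$ is exact in $\A$ (by fullness, since it is exact in $\C$ by \cite[Proposition~1.5]{GeissKellerOppermann}) and then invokes \cite[Lemma~1.4]{GeissKellerOppermann} directly in $\A$ to conclude that $A_\bullet$ is an $n$-angle there; you instead upgrade $\varphi$ to an isomorphism in $\C$ (the five-lemma/Yoneda argument you sketch), transport it to $\A$ by fullness, and finish with (N1)(a). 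Both endings are valid and amount to the same underlying fact; your version avoids the separate exactness check, while the paper's version avoids re-proving the weak-isomorphism lemma by citing it.
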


\begin{proof}
  Consider an $n$-$\Sigma$-sequence 
  \begin{equation*}
    A_{\bullet} \colon \quad A_1 \xrightarrow{\alpha_1} A_2
    \xrightarrow{\alpha_2} \cdots \xrightarrow{\alpha_{n - 1}} A_n
    \xrightarrow{\alpha_n} \Sigma A_1 
  \end{equation*}
  in $\A$.  If $A_{\bullet}$ is an $n$-angle in $\A$, then it is also
  an $n$-angle in $\C$, since the inclusion functor $\iota \colon \A
  \to \C$ is $n$-angulated.  Conversely, suppose that $A_{\bullet}$ is
  an $n$-angle in $\C$.  Using axiom (N1)(c), we can complete the map
  $\alpha_1 \colon A_1 \to A_2$ to an $n$-angle
  \begin{equation*}
    B_{\bullet} \colon \quad A_1 \xrightarrow{\alpha_1} A_2
    \xrightarrow{\beta_2} B_3 \xrightarrow{\beta_3} \cdots
    \xrightarrow{\beta_{n - 1}} B_n \xrightarrow{\beta_n} \Sigma A_1 
  \end{equation*}
  in $\A$.  As above, this $n$-angle is also an $n$-angle in $\C$. Now
  use axiom (N3) to obtain a morphism
  \begin{center}
    \begin{tikzpicture}[text centered]
      \node (X0) at (-1.5,1.5){$B_{\bullet} \colon$};
      \node (X1) at (0,1.5){$A_1$};
      \node (X2) at (1.5,1.5){$A_2$};
      \node (X3) at (3,1.5){$B_3$};
      \node (Xdots) at (4.5,1.5){$\cdots$};
      \node (Xn) at (6,1.5){$B_n$};
      \node (SX1) at (7.5,1.5){$\Sigma A_1$};

      \node (Y0) at (-1.5,0){$A_{\bullet} \colon$};
      \node (Y1) at (0,0){$A_1$};
      \node (Y2) at (1.5,0){$A_2$};
      \node (Y3) at (3,0){$A_3$};
      \node (Ydots) at (4.5,0){$\cdots$};
      \node (Yn) at (6,0){$A_n$};
      \node (SY1) at (7.5,0){$\Sigma A_1$};

      \begin{scope}[font=\scriptsize,->,midway]
        \draw (X0) -- node[right]{$\varphi$} (Y0);
        \draw[-,double equal sign distance] (X1) -- (Y1);
        \draw[-,double equal sign distance] (X2) -- (Y2);
        \draw[dashed] (X3) -- node[right]{$\varphi_3$} (Y3);
        \draw[dashed] (Xn) -- node[right]{$\varphi_n$} (Yn);
        \draw[-,double equal sign distance] (SX1) -- (SY1);
      
        \draw (X1) -- node[above]{$\alpha_1$} (X2);
        \draw (X2) -- node[above]{$\beta_2$} (X3);
        \draw (X3) -- node[above]{$\beta_3$} (Xdots);
        \draw (Xdots) -- node[above]{$\beta_{n - 1}$} (Xn);
        \draw (Xn) -- node[above]{$\beta_n$} (SX1);
        \draw (Y1) -- node[above]{$\alpha_1$} (Y2);
        \draw (Y2) -- node[above]{$\alpha_2$} (Y3);
        \draw (Y3) -- node[above]{$\alpha_3$} (Ydots);
        \draw (Ydots) -- node[above]{$\alpha_{n - 1}$} (Yn);
        \draw (Yn) -- node[above]{$\alpha_n$} (SY1);
      \end{scope}
    \end{tikzpicture}
  \end{center}
  of $n$-angles in $\C$.  Since $\A$ is a full subcategory of $\C$,
  the maps $\varphi_3, \dots, \varphi_n$ belong to $\A$.  For the same
  reason, the $n$-$\Sigma$-sequence $A_{\bullet}$ is exact in $\A$,
  since it is exact in $\C$ by \cite[Proposition
  1.5]{GeissKellerOppermann}.  Consequently, the morphism $\varphi
  \colon B_{\bullet} \to A_{\bullet}$ is a weak isomorphism from an
  $n$-angle in $\A$ to an exact $n$-$\Sigma$-sequence in $\A$.  By
  \cite[Lemma 1.4]{GeissKellerOppermann}, the $n$-$\Sigma$-sequence
  $A_{\bullet}$ is an $n$-angle in $\A$.
\end{proof}

Our proof of the classification theorem is modeled on that provided by
Thomason.  We start with the following technical lemma.

\begin{lemma}
  \label{lem:technical}
  Let $(\C,\Sigma)$ be an $n$-angulated category, $(\A,\Sigma)$ a
  complete $n$-angulated subcategory, and $C$ an object in $\C$.  If
  there exists and object $A \in \A$ such that $C \oplus A \in \A$,
  then $C \in \A$.
\end{lemma}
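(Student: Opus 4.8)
The plan is to adapt Thomason's trick from the triangulated case: there one completes the split inclusion $A \to C \oplus A$ to the split triangle $A \to C \oplus A \to C \to \Sigma A$ and invokes the two-out-of-three property of triangulated subcategories. In our situation the completeness hypothesis on $\A$ plays exactly the role of that two-out-of-three property, so it suffices to exhibit an $n$-angle in $\C$ one of whose vertices is $C$ while the other $n-1$ vertices all lie in $\A$.

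First I would take the trivial $n$-angle $C \xrightarrow{1} C \to 0 \to \cdots \to 0 \to \Sigma C$ on $C$, together with the rotated trivial $n$-angle $0 \to A \xrightarrow{1} A \to 0 \to \cdots \to 0 \to 0$ on $A$ (this is an $n$-angle, just as such rotated trivial sequences are used in the proof of Proposition~\ref{prop:elementary-properties}(2)). By axiom (N1)(a) their direct sum is again an $n$-angle in $\C$, and after the evident identifications $C \oplus 0 \cong C$, $0 \oplus A \cong A$ and $0 \oplus 0 \cong 0$ it takes the form
\begin{equation*}
  C \longrightarrow C \oplus A \longrightarrow A \longrightarrow 0 \longrightarrow \cdots \longrightarrow 0 \longrightarrow \Sigma C,
\end{equation*}
whose $n$ vertices are $C$, $C \oplus A$, $A$ and $n-3$ copies of $0$.

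Next I would observe that $n-1$ of these vertices lie in $\A$: the vertex $C \oplus A$ and the vertex $A$ by hypothesis, and the $n-3$ zero objects because $\A$ is additive and closed under isomorphisms (a zero object of the full additive subcategory $\A$ has trivial identity, hence is a zero object of $\C$, hence is isomorphic to $0$). Applying the completeness of $\A$ to this $n$-angle then forces the remaining vertex $C$ into $\A$, which is the assertion. I do not anticipate a genuine obstacle here; the only points needing a little care are the bookkeeping of the direct sum — checking that the objects occupy the claimed positions so that precisely $n-1$ of the $n$ vertices are manifestly in $\A$ — and the routine remark that a zero object of $\A$ is a zero object of $\C$.
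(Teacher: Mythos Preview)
Your proof is correct and is essentially identical to the paper's own argument: the paper writes down exactly the same $n$-angle $C \to C \oplus A \to A \to 0 \to \cdots \to 0 \to \Sigma C$, remarks that it is a direct sum of trivial $n$-angles, and applies completeness to conclude $C \in \A$. You have simply spelled out in more detail which two trivial $n$-angles are being summed and why the zero objects lie in $\A$.
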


\begin{proof}
  The $n$-$\Sigma$-sequence
  \begin{equation*}
    C \xrightarrow{\left[
        \begin{smallmatrix}
          1\\
          0
        \end{smallmatrix}
      \right]} C\oplus A \xrightarrow{\left[
        \begin{smallmatrix}
          0 & 1
        \end{smallmatrix}
      \right]} A \to 0 \to \cdots \to 0 \to \Sigma C
  \end{equation*}
  is an $n$-angle in $\C$, since it is the direct sum of trivial
  $n$-angles.  The $n - 1$ last vertices are all objects in $\A$,
  hence, since $\A$ is complete, so is $C$.
\end{proof}

From now on we need to restrict to the case when $n$ is odd, as
explained in the beginning of this section.  The next result is an
$n$-angulated version of \cite[Lemma 2.2]{Thomason}.  Note first that
if $(\C,\Sigma)$ is an $n$-angulated category and $(\A,\Sigma)$ an
$n$-angulated subcategory, then the inclusion functor $\iota \colon \A
\to \C$ induces a homomorphism
\begin{equation*}
  K_0(\A) \to K_0(\C)
\end{equation*}
of Grothendieck groups, since the functor maps $n$-angles in $\A$ to
$n$-angles in $\C$.  We shall denote the image of this homomorphism by
$\Image K_0(\A)$; this is a subgroup of $K_0(\C)$.

\begin{lemma}
  \label{lem:subcategory}
  Let $n \geq 3$ be an odd integer, let $(\C,\Sigma)$ be an
  $n$-angulated category and $(\A,\Sigma)$ a complete and dense
  $n$-angulated subcategory of $(\C,\Sigma)$.  Then for any object
  $C\in \C$ the following holds: $C$ belongs to $\A$ if and only if
  $[C] = 0$ in $K_0(\C)/\Image K_0(\A)$.
\end{lemma}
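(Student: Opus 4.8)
The plan is to prove the two implications separately. The forward direction is easy: if $C \in \A$, then $\langle C\rangle$ lies in the image of $F(\A) \to F(\C)$, so $[C]$ already lies in $\Image K_0(\A)$, and hence $[C] = 0$ in the quotient $K_0(\C)/\Image K_0(\A)$. Here we use that the inclusion $\iota\colon\A\to\C$ is $n$-angulated, so it genuinely induces a map on Grothendieck groups, and that $[C]$ computed in $K_0(\A)$ maps to $[C]$ computed in $K_0(\C)$.

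The substance is the converse: assume $[C] = 0$ in $K_0(\C)/\Image K_0(\A)$, i.e.\ $[C] \in \Image K_0(\A)$, and show $C \in \A$. By Proposition~\ref{prop:elementary-properties}(3) (using that $n$ is odd), every element of $K_0(\A)$ is of the form $[A]$ for a single object $A \in \A$, and its image in $K_0(\C)$ is $[A]$. So the hypothesis gives an object $A \in \A$ with $[C] = [A]$ in $K_0(\C)$. Now apply Corollary~\ref{cor:equality}: there exist objects $C_1,\dots,C_n \in \C$ and two $n$-angles in $\C$,
\begin{equation*}
  C \oplus C_1 \xrightarrow{\gamma_1} C_2 \xrightarrow{\gamma_2} \cdots \xrightarrow{\gamma_{n-1}} C_n \xrightarrow{\gamma_n} \Sigma C \oplus \Sigma C_1,
\end{equation*}
\begin{equation*}
  A \oplus C_1 \xrightarrow{\delta_1} C_2 \xrightarrow{\delta_2} \cdots \xrightarrow{\delta_{n-1}} C_n \xrightarrow{\delta_n} \Sigma A \oplus \Sigma C_1,
\end{equation*}
sharing the middle terms $C_2,\dots,C_n$.

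The key step is to leverage density together with completeness to pull the shared objects $C_1,\dots,C_n$ into $\A$, and then conclude via Lemma~\ref{lem:technical}. Since $\A$ is dense, each $C_i$ is a direct summand of some object of $\A$; choose $D_i \in \C$ with $C_i \oplus D_i \in \A$, and for bookkeeping it is convenient to replace each $C_i$ by $C_i \oplus D_i$ simultaneously in both $n$-angles, adding a trivial $n$-angle on $D_i$ in the appropriate slot to each sequence (just as in the proof of Proposition~\ref{prop:description}, where trivial $n$-angles are spliced in to move summands around). After this modification we may assume $C_2,\dots,C_n \in \A$; we still need to handle $C_1$, but note that density also gives $D \in \C$ with $C_1 \oplus D \in \A$, and adding a trivial $n$-angle on $D$ to the first sequence in slot one (and correspondingly) lets us assume $C_1 \in \A$ as well — at the cost of replacing $C$ by $C$ and $A$ by $A \oplus D$, which is harmless since $A \oplus D \in \A$ is still in $\A$ only if $D \in \A$; so more carefully, one adds the trivial $n$-angle $D \xrightarrow{1} D \to 0 \to \cdots \to 0 \to \Sigma D$ to the first $n$-angle and $0 \to D \xrightarrow{1} D \to \cdots$ shifted appropriately is not available, so instead we add $D \xrightarrow{1} D \to 0 \to \cdots \to \Sigma D$ to \emph{both} $n$-angles in the first slot, turning $C$ into $C \oplus D$ and $A$ into $A \oplus D$ and $C_1$ into $C_1 \oplus D \in \A$. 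Now in the second $n$-angle all vertices $A \oplus C_1 (\oplus D)$, $C_2,\dots,C_n$ are in $\A$ except possibly the last, $\Sigma A \oplus \Sigma(C_1 \oplus D)$; but $\Sigma$ is an automorphism of $\C$ restricting to an automorphism of $\A$ (as $\A$ is $n$-angulated with the same $\Sigma$), so that vertex is in $\A$ too. Then completeness applied to the \emph{first} $n$-angle, in which $C_1 \oplus D, C_2, \dots, C_n$ and $\Sigma C \oplus \Sigma C_1 \oplus \Sigma D$ are the $n-1$ vertices we control — wait, we need $n-1$ vertices in $\A$ to conclude the $n$-th is in $\A$. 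In the first $n$-angle the vertices are $C \oplus C_1 \oplus D$, $C_2, \dots, C_n$, $\Sigma C \oplus \Sigma C_1 \oplus \Sigma D$; of these $C_2,\dots,C_n$ ($n-1$ of them) are in $\A$, so completeness forces $C \oplus C_1 \oplus D \in \A$. Since $C_1 \oplus D \in \A$, Lemma~\ref{lem:technical} yields $C \in \A$, as desired.

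The main obstacle is the bookkeeping in the density-reduction step: one must splice trivial $n$-angles into both $n$-angles in a coordinated way so that (i) the middle objects $C_2,\dots,C_n$ (and $C_1$) become objects of $\A$, (ii) the two sequences continue to share these middle objects, and (iii) the outer objects still record $C$ and $A$ up to adding objects of $\A$. The oddness of $n$ is used twice: once to invoke Proposition~\ref{prop:elementary-properties}(3) and Corollary~\ref{cor:equality}, and implicitly in that the suspension behaves compatibly. Everything else is a direct application of completeness and Lemma~\ref{lem:technical}.
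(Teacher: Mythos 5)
Your forward direction is fine, and your overall skeleton (use $n$ odd to rewrite the hypothesis as $[C]=[A]$ with $A\in\A$, invoke Corollary \ref{cor:equality}, use density to absorb the middle terms into $\A$, then finish with completeness and Lemma \ref{lem:technical}) is the same as the paper's. But the reduction step at the heart of your converse fails, and the failure is structural. A trivial $n$-angle, in any rotation, has its nonzero object in \emph{two} slots — two adjacent ones, or slot $1$ and slot $n$ in suspended form — so you can never add a complement $D_i$ to a single middle vertex ``in the appropriate slot'': every correction to slot $i$ spills into slot $i-1$ or $i+1$, and when you try to repair slot $1$ by adding $D\xrightarrow{1}D\to0\to\cdots\to\Sigma D$, you simultaneously dump $D$ into slot $2$, destroying the membership in $\A$ you had just arranged there (and the rotated version dumps $\Sigma D$ into slot $n$ instead, with the same problem). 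More decisively: your endgame applies completeness and Lemma \ref{lem:technical} to the \emph{first} $n$-angle only and never uses $A$ or the second $n$-angle at all. If your reduction (``we may assume $C_1\oplus D,\,C_2,\dots,C_n\in\A$ while the first vertex changes only by summands known to lie in $\A$'') were achievable by density alone, the identical argument applied to the trivial $n$-angle $C\to C\to0\to\cdots\to0\to\Sigma C$ would show that \emph{every} object of $\C$ lies in $\A$, contradicting, e.g., $\A=\A_H$ for a proper subgroup $H$ as in Lemma \ref{lem:subgroup}. So the sentence ``after this modification we may assume $C_2,\dots,C_n\in\A$'' (and likewise for $C_1$) is exactly the gap: it cannot be justified.

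The paper closes this gap as follows: add trivial $n$-angles so that slot $k$ becomes $\bigoplus_{i=k}^n(C_i\oplus C_i')\in\A$ for $2\leq k\leq n$, accepting that the first vertices become $C\oplus\overline{C}$ and $A\oplus\overline{C}$ with $\overline{C}=C_1\oplus C_2'\oplus C_3\oplus\cdots\oplus C_{n-1}'\oplus C_n$ \emph{not} yet known to lie in $\A$. Completeness applied to \emph{both} modified $n$-angles gives $C\oplus\overline{C}\in\A$ and $A\oplus\overline{C}\in\A$; then Lemma \ref{lem:technical} applied with $A\in\A$ yields $\overline{C}\in\A$, and applied once more yields $C\in\A$. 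This is precisely where the hypothesis $[C]=[A]$, i.e.\ the second $n$-angle, is genuinely used — the ingredient your argument dispenses with, which is why it proves too much.
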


\begin{proof}
  If $C$ belongs to $\A$, then in $K_0(\C)$ the element $[C]$
  obviously belongs to the subgroup $\Image K_0(\A)$.  Thus $[C] = 0$
  in $K_0(\C)/\Image K_0(\A)$.

  Conversely, suppose that $[C] = 0$ in $K_0(\C)/\Image K_0(\A)$.
  Then, as above, in $K_0(\C)$ the element $[C]$ belongs to the
  subgroup $\Image K_0(\A)$, hence there is an object $A \in \A$ such
  that $[C] = [A]$ in $K_0(\C)$.  By Corollary \ref{cor:equality},
  there exist objects $C_1, \dots, C_n$ and two $n$-angles
  \begin{equation*}
    C \oplus C_1 \xrightarrow{\alpha_1} C_2 \xrightarrow{\alpha_2}
    \cdots \xrightarrow{\alpha_{n-1}} C_n \xrightarrow{\alpha_n} \Sigma
    C \oplus \Sigma C_1
  \end{equation*}
  \begin{equation*}
    A \oplus C_1 \xrightarrow{\beta_1} C_2 \xrightarrow{\beta_2} \cdots
    \xrightarrow{\beta_{n-1}} C_n \xrightarrow{\beta_n} \Sigma A \oplus
    \Sigma C_1
  \end{equation*}
  in $\C$.  For each $i$, choose an object $C_i'$ such that $C_i
  \oplus C_i'$ belongs to $\A$; there exists such an object since $\A$
  is dense.  Furthermore, define the object $\overline{C}$ by
  \begin{equation*}
    \overline{C} = C_1 \oplus C_2' \oplus C_3 \oplus \cdots \oplus
    C_{n-1}' \oplus C_n.
  \end{equation*}
  By adding trivial $n$-angles involving the objects $C_i$ and $C_i'$
  to the two $n$-angles above, we obtain two new $n$-angles
  \begin{center}
    \begin{tikzpicture}[text centered]
      \node (X1) at (0,0){$C \oplus \overline{C}$};
      \node (X2) at (2.5,0){$\bigoplus\limits_{i=2}^n (C_i \oplus
        C_i')$};
      \node (X3) at (5.5,0){$\bigoplus\limits_{i=3}^n (C_i \oplus
        C_i')$};
      \node (Xdots) at (7.8,0){$\cdots$};
      \node (Xn) at (9.8,0){$C_n \oplus C_n'$};
      \node (SX1) at (12,0){$\Sigma C \oplus \Sigma \overline{C}$};
      
      \node (Y1) at (0,-1){$A \oplus \overline{C}$};
      \node (Y2) at (2.5,-1){$\bigoplus\limits_{i=2}^n (C_i \oplus
        C_i')$};
      \node (Y3) at (5.5,-1){$\bigoplus\limits_{i=3}^n (C_i \oplus
        C_i')$};
      \node (Ydots) at (7.8,-1){$\cdots$};
      \node (Yn) at (9.8,-1){$C_n \oplus C_n'$};
      \node (SY1) at (12,-1){$\Sigma A \oplus \Sigma \overline{C}$};

      \begin{scope}[font=\scriptsize,->,midway]
        \draw (X1) -- node[above]{} (X2);
        \draw (X2) -- node[above]{} (X3);
        \draw (X3) -- node[above]{} (Xdots);
        \draw (Xdots) -- node[above]{} (Xn);
        \draw (Xn) -- node[above]{} (SX1);
        
        \draw (Y1) -- node[above]{} (Y2);
        \draw (Y2) -- node[above]{} (Y3);
        \draw (Y3) -- node[above]{} (Ydots);
        \draw (Ydots) -- node[above]{} (Yn);
        \draw (Yn) -- node[above]{} (SY1);

      \end{scope}
    \end{tikzpicture}
  \end{center}
  in $\C$.  In both of these, the last $n-1$ vertices are objects in
  $\A$, hence, since $\A$ is dense, so are the objects $C \oplus
  \overline{C}$ and $A \oplus \overline{C}$.  Now, all the three
  objects $A$, $A \oplus \overline{C}$ and $C \oplus \overline{C}$
  belong to $\A$.  Using Lemma \ref{lem:technical} twice, we see that
  $\overline{C}$ and $C$ also belong to $\A$.
\end{proof}

We need one more lemma before we can prove the classification theorem.

\begin{lemma}
  \label{lem:subgroup}
  Let $n \geq 3$ be an odd integer and $(\C,\Sigma)$ an $n$-angulated
  category.  For a subgroup $H$ of $K_0(\C)$, denote by $\A_H$ the
  full subcategory of $\C$ consisting of those objects $A$ in $\C$
  such that $[A] \in H \leq K_0(\C)$.  Then $(\A_H,\Sigma)$ is a
  complete and dense $n$-angulated subcategory of $(\C,\Sigma)$, by
  declaring the $n$-angles in $\C$ with all objects in $\A_H$ to be
  the $n$-angles in $\A_H$.
\end{lemma}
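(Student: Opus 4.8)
The plan is to verify the three claims about $\A_H$ in turn: that it is $n$-angulated, that it is dense, and that it is complete.

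\medskip

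\noindent\textbf{Density.} Let $C\in\C$ be arbitrary. Using Proposition \ref{prop:elementary-properties}(2), we have $[C\oplus\Sigma C]=[C]+[\Sigma C]=[C]+(-1)^n[C]=0$ since $n$ is odd, so $[C\oplus\Sigma C]=0\in H$, whence $C\oplus\Sigma C\in\A_H$. Thus every object of $\C$ is a direct summand of an object of $\A_H$, so $\A_H$ is dense.

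\medskip

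\noindent\textbf{$n$-angulated structure.} First, $\A_H$ is a full additive subcategory closed under isomorphisms: if $A\cong A'$ then $\langle A\rangle=\langle A'\rangle$ so $[A]=[A']$, and if $A,B\in\A_H$ then $[A\oplus B]=[A]+[B]\in H$ by Proposition \ref{prop:elementary-properties}(2). One checks that the restriction of $\Sigma$ to $\A_H$ is an automorphism of $\A_H$: if $[A]\in H$ then $[\Sigma A]=(-1)^n[A]=-[A]\in H$, and similarly $[\Sigma^{-1}A]\in H$. Now declare an $n$-$\Sigma$-sequence in $\A_H$ to be an $n$-angle precisely when it is an $n$-angle in $\C$; I must check the axioms (N1)--(N4). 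Axioms (N1)(a), (N2), (N3) are inherited directly from $\C$ once we know $\A_H$ is closed under the relevant operations. The only subtle point in (N1)(a) is closure under direct summands as an $n$-$\Sigma$-sequence: if $A_\bullet\oplus B_\bullet$ is an $n$-angle in $\A_H$ (so in $\C$) and $A_\bullet$, $B_\bullet$ are $n$-$\Sigma$-sequences in $\A_H$, then $A_\bullet$ is an $n$-angle in $\C$ by (N1)(a) for $\C$, hence in $\A_H$. For (N1)(b), trivial $n$-angles on $A\in\A_H$ lie in $\A_H$ since $0\in\A_H$. The key remaining axioms are (N1)(c) and (N4), and these are where I expect the main obstacle: given a morphism $\alpha\colon A_1\to A_2$ in $\A_H$, axiom (N1)(c) in $\C$ produces an $n$-angle $A_1\to A_2\to A_3\to\cdots\to A_n\to\Sigma A_1$ in $\C$, but its intermediate vertices $A_3,\dots,A_n$ need not a priori lie in $\A_H$; and likewise the mapping cone in (N4) has vertices $A_{i+1}\oplus B_i$ which we need in $\A_H$.

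\medskip

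\noindent\textbf{Completeness, and closing the gap in the $n$-angulation.} The way out of the obstacle above is to prove completeness first, and completeness is really the heart of the matter. Let $A_\bullet\colon A_1\to\cdots\to A_n\to\Sigma A_1$ be an $n$-angle in $\C$ with $n-1$ of the $n$ vertices $A_1,\dots,A_n$ lying in $\A_H$; I claim the remaining vertex also lies in $\A_H$. Say $A_j$ is the missing vertex. The Euler relation gives $\sum_{i=1}^n(-1)^{i+1}[A_i]=0$ in $K_0(\C)$, so $(-1)^{j+1}[A_j]=-\sum_{i\neq j}(-1)^{i+1}[A_i]$, and the right-hand side lies in $H$ because each $[A_i]$ with $i\neq j$ does; hence $[A_j]=(-1)^{j+1}\bigl((-1)^{j+1}[A_j]\bigr)\in H$ (using that $H$ is a subgroup, so closed under negation), i.e. $A_j\in\A_H$. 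This proves $\A_H$ is complete. Now return to (N1)(c): given $\alpha\colon A_1\to A_2$ with $A_1,A_2\in\A_H$, complete it in $\C$ to an $n$-angle $A_1\to A_2\to A_3\to\cdots\to A_n\to\Sigma A_1$. We want all intermediate vertices in $\A_H$, but completeness only delivers one missing vertex at a time. Instead, one argues as follows: after rotating (axiom (N2), which preserves $n$-angles in $\C$), we may keep track of the classes; actually the cleanest route is to observe that for this particular $n$-angle, $[A_3]$ need not lie in $H$, so we first replace the $n$-angle by its direct sum with suitable trivial $n$-angles to make all but possibly one vertex lie in $\A_H$, then invoke completeness. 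Concretely, for each $k$ with $3\le k\le n$ in turn, take the direct sum with a trivial $n$-angle of the form $\Sigma^{?}C\to\cdots$ built from an object $C$ chosen (using density) so that $A_k$ gets replaced by something in $\A_H$ without disturbing the earlier adjustments; since the first two vertices $A_1,A_2$ are already in $\A_H$ and unchanged, and since at each stage the Euler relation pins down the one remaining class, completeness finishes the job. For (N4), the mapping cone is an $n$-angle in $\C$ by axiom (N4) for $\C$; its vertices are $A_{i+1}\oplus B_i$ (and $\Sigma A_1\oplus B_n$, $\Sigma A_2\oplus\Sigma B_1$), all of which lie in $\A_H$ because $\A_H$ is closed under $\oplus$ and $\Sigma$ and the constituent objects lie in $\A_H$ once the $\varphi_i$ in the given (N3)-completion have been chosen inside $\A_H$; hence the mapping cone is an $n$-angle in $\A_H$. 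Finally, axiom (N3) produces the dashed morphisms, and fullness of $\A_H$ in $\C$ ensures they are morphisms in $\A_H$. This establishes that $(\A_H,\Sigma)$ is an $n$-angulated subcategory, and together with the density and completeness verified above, proves the lemma; the inclusion functor is $n$-angulated by construction since $n$-angles in $\A_H$ are by definition $n$-angles in $\C$.
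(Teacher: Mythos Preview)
Your proof is correct and follows essentially the same route as the paper: density via $C\oplus\Sigma C$, completeness via the Euler relation, and then verifying the $n$-angulation axioms with (N1)(c) as the only nontrivial point, handled by adding trivial $n$-angles (using density) to force all but one intermediate vertex into $\A_H$ and then invoking completeness. The paper makes the (N1)(c) construction slightly more explicit---choosing $C_i'$ with $C_i\oplus C_i'\in\A_H$ for $3\le i\le n$ and summing in the trivial $n$-angles so that vertices $3,\dots,n-1$ become cumulative sums $\bigoplus_{i=3}^{k}(C_i\oplus C_i')$---but your sketch captures the same idea.
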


\begin{proof}
  We first show that $\A_H$ is dense and complete.  If $C$ is any
  object in $\C$, then
  \begin{equation*}
    [C \oplus \Sigma C] = [C] + [\Sigma C] = [C] - [C] = 0 \in H
  \end{equation*}
  in $K_0(\C)$, hence $C \oplus \Sigma C$ belongs to $\A_H$ by
  definition.  This shows that $\A_H$ is dense in $\C$.  To prove
  completeness, suppose that
  \begin{equation*}
    C_1 \xrightarrow{} C_2 \xrightarrow{} \cdots \xrightarrow{} C_n
    \xrightarrow{} \Sigma C_1
  \end{equation*}
  is an $n$-angle in $\C$ in which all but possibly one of the
  vertices, say $C_t$, are objects in $\A_H$.  The Euler relation in
  $K_0(\C)$ gives $\Sigma_{i=1}^n (-1)^{i+1} [C_i] =0 \in H$, and
  since all the $[C_i]$ with $i \neq t$ are elements in $H$, we obtain
  $[C_t] \in H$.  But then $C_t \in \A_H$ by definition, hence $\A_H$
  is complete.

  To show that $(\A_H,\Sigma)$ is an $n$-angulated category, we must
  prove that $\A_H$ is closed under the automorphism $\Sigma$, and
  verify that the collection of declared $n$-angles in $\A_H$
  satisfies the four axioms (N1)--(N4).

  The first part is easy.  For if $A \in \A_H$, then by definition
  $[A] \in H$ in $K_0(\C)$, and then $-[A] \in H$ since $H$ is a
  subgroup of $K_0(\C)$.  But $[\Sigma A] = -[A] = [\Sigma^{-1} A]$,
  so both $\Sigma A$ and $\Sigma^{-1} A$ belong to $\A_H$.  Hence
  $\A_H$ is closed under the automorphism $\Sigma$.

  Next, we verify that the collection $\nang_{\A_H}$ of declared
  $n$-angles in $\A_H$ satisfies the four $n$-angle axioms.  Note
  first that $\A_H$ is closed under isomorphisms.  For if $A \in \A_H$
  and $C \in \C$ are isomorphic objects, then $[C] = [A] \in H$ in
  $K_0(\C)$, so that $C \in \A_H$.  Now combine this fact with the
  fact that $\A_H$ is a full subcategory of $\C$, and that the
  collection $\nang_{\A_H}$ consists precisely of those $n$-angles in
  $\C$ with all objects in $\A_H$.  It follows that the collection
  $\nang_{\A_H}$ satisfies all the axioms (N1)--(N4), except possibly
  axiom (N1)(c), which we must verify directly.  Let therefore $\alpha
  \colon A_1 \to A_2$ be a morphism in $\A_H$.  By applying axiom
  (N1)(c) in $\C$, we can complete this morphism to an $n$-angle
  \begin{equation*}
    A_1 \xrightarrow{\alpha} A_2 \xrightarrow{} C_3 \xrightarrow{}
    \cdots \xrightarrow{} C_n \xrightarrow{} \Sigma A_1
  \end{equation*}
  in $\C$.  Since $\A_H$ is dense in $\C$, there exist objects $C_3',
  \dots, C_n'$ such that $C_i \oplus C_i' \in \A_H$ for $3 \leq i \leq
  n$.  Now take the above $n$-angle and add trivial $n$-angles
  involving the objects $C_i$ and $C_i'$.  The result is an $n$-angle
  \begin{center}
    \begin{tikzpicture}[text centered]
      \node (X1) at (-0.2,0){$A_1$};
      \node (X2) at (1,0){$A_2$};
      \node (X3) at (2.5,0){$C_3 \oplus C_3'$};
      \node (X4) at (4.8,0){$\bigoplus\limits_{i=3}^4 (C_i \oplus
        C_i')$};
      \node (Xdots) at (6.8,0){$\cdots$};
      \node (Xn-1) at (9,0){$\bigoplus\limits_{i=3}^{n-1} (C_i \oplus
        C_i')$};
      \node (Xn) at (11,0){$C$};
      \node (SX1) at (12.3,0){$\Sigma A_1$};
      
      \begin{scope}[font=\scriptsize,->,midway]
        \draw (X1) -- node[above]{$\alpha$} (X2);
        \draw (X2) -- node[above]{} (X3);
        \draw (X3) -- node[above]{} (X4);
        \draw (X4) -- node[above]{} (Xdots);
        \draw (Xdots) -- node[above]{} (Xn-1);
        \draw (Xn-1) -- node[above]{} (Xn);
        \draw (Xn) -- node[above]{} (SX1);
      \end{scope}
    \end{tikzpicture}
  \end{center}
  in $\C$, with $C = C_n \oplus C_{n-1}' \oplus C_{n-2} \oplus \cdots
  \oplus C_4' \oplus C_3$.  In this $n$-angle, all the $n - 1$ first
  vertices are objects in $\A_H$, hence so is $C$ since $\A_H$ is
  complete.  Consequently, this $n$-angle belongs to the collection
  $\nang_{\A_H}$, and we have shown that the collection also satisfies
  axiom (N1)(c).  Therefore $(\A_H,\Sigma)$ is an $n$-angulated
  category.

  It only remains to show that $\A_H$ is an $n$-angulated subcategory
  of $\C$, but this is easy.  The subcategory $\A_H$ is full by
  definition, and it is $n$-angulated and closed under isomorphism by
  the above. Finally, the inclusion functor $\iota \colon \A_H \to \C$
  is, by definition of the collection $\nang_{\A_H}$, an $n$-angulated
  functor.
\end{proof}

We are now ready to prove the main result.  It shows that the
subgroups of the Grothendieck group classify the complete and dense
$n$-angulated subcategories via a bijective correspondence.  As
mentioned after Definition \ref{def:fullsub}, in the triangulated case
every triangulated subcategory is complete, hence we recover
Thomason's classification theorem \cite[Theorem 2.1]{Thomason}.

\begin{theorem}
  \label{thm:correspondence}
  Let $n \geq 3$ be an odd integer and $(\C,\Sigma)$ an $n$-angulated
  category.  Denote the set of complete $n$-angulated subcategories of
  $\C$ by $\complete(\C)$, the set of dense $n$-angulated
  subcategories of $\C$ by $\dense(\C)$, and the set of subgroups of
  $K_0(\C)$ by $\subgroup\left(K_0(\C)\right)$. Then there is a
  one-to-one correspondence
  \begin{equation*}
    \begin{array}{r@{\ }c@{\ }l}
      \complete(\C) \cap \dense(\C) & \longleftrightarrow &
      \subgroup\left(K_0(\C)\right)\\
      \A & \longmapsto & \Image K_0(\A)\\
      \A_H & \longmapsfrom & H
    \end{array}
  \end{equation*}
  where $\A_H$ is the subcategory of $\C$ consisting of those objects
  $A$ in $\C$ such that $[A]\in H \leq K_0(\C)$.
\end{theorem}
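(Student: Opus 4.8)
The plan is to verify that the two assignments $\A \mapsto \Image K_0(\A)$ and $H \mapsto \A_H$ are well defined and mutually inverse; almost all of the substantive work has already been carried out in Lemmas \ref{lem:technical}--\ref{lem:subgroup}, so what remains is essentially bookkeeping together with one systematic use of the hypothesis that $n$ is odd.

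First I would check that both maps land where they should. Given $\A \in \complete(\C) \cap \dense(\C)$, the inclusion functor $\iota \colon \A \to \C$ is $n$-angulated, so it induces a homomorphism $K_0(\A) \to K_0(\C)$, and $\Image K_0(\A)$ is a subgroup of $K_0(\C)$, as claimed. Conversely, given a subgroup $H$ of $K_0(\C)$, Lemma \ref{lem:subgroup} tells us that $(\A_H,\Sigma)$ is a complete and dense $n$-angulated subcategory of $(\C,\Sigma)$, so $\A_H \in \complete(\C) \cap \dense(\C)$.

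Next I would compute the composite $H \mapsto \A_H \mapsto \Image K_0(\A_H)$ and show it equals $H$. For the inclusion $\Image K_0(\A_H) \subseteq H$: since $n$ is odd, Proposition \ref{prop:elementary-properties}(3) applied to the $n$-angulated category $\A_H$ shows that every element of $K_0(\A_H)$ is the class of some object $A \in \A_H$, and under $K_0(\A_H) \to K_0(\C)$ this maps to $[A] \in K_0(\C)$, which lies in $H$ by the very definition of $\A_H$. For the reverse inclusion $H \subseteq \Image K_0(\A_H)$: given $h \in H \subseteq K_0(\C)$, Proposition \ref{prop:elementary-properties}(3) (again using that $n$ is odd) provides an object $C \in \C$ with $h = [C]$; then $[C] = h \in H$, so $C \in \A_H$ by definition, and hence $h = [C]$ lies in $\Image K_0(\A_H)$. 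Therefore $\Image K_0(\A_H) = H$.

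Finally I would compute the other composite $\A \mapsto \Image K_0(\A) \mapsto \A_{\Image K_0(\A)}$ and show it equals $\A$. By definition, an object $C \in \C$ lies in $\A_{\Image K_0(\A)}$ precisely when $[C] \in \Image K_0(\A)$ in $K_0(\C)$, i.e.\ precisely when $[C] = 0$ in $K_0(\C)/\Image K_0(\A)$; since $\A$ is complete and dense, Lemma \ref{lem:subcategory} says this happens exactly when $C \in \A$, so $\A_{\Image K_0(\A)} = \A$. As both round-trips are the identity, the two maps are mutually inverse bijections, which is the assertion of the theorem. The only real subtlety here — beyond invoking the lemmas — is that the argument genuinely needs $n$ odd, precisely because it leans on Proposition \ref{prop:elementary-properties}(3) to represent every Grothendieck-group element as the class of a single object; the deeper obstacles (verifying axioms (N1)--(N4) for $\A_H$, and the double application of Lemma \ref{lem:technical}) are already dealt with inside Lemmas \ref{lem:subgroup} and \ref{lem:subcategory}.
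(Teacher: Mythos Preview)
Your proposal is correct and follows essentially the same approach as the paper: both verify well-definedness via Lemma \ref{lem:subgroup}, use Lemma \ref{lem:subcategory} to show $\A_{\Image K_0(\A)} = \A$, and then check $\Image K_0(\A_H) = H$ by writing elements of $H$ as $[C]$ for $C \in \C$. Your version is slightly more explicit in invoking Proposition \ref{prop:elementary-properties}(3) (and hence the oddness of $n$) for both inclusions in that last step, whereas the paper simply says one inclusion is ``easy to see,'' but the argument is the same.
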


\begin{proof}
  By Lemma \ref{lem:subgroup}, for every subgroup $H$ of $K_0(\C)$,
  the category $(\A_H,\Sigma)$ is a complete and dense $n$-angulated
  subcategory of $\C$.  Therefore, the two maps
  \begin{eqnarray*}
    \complete(\C) \cap \dense(\C) & \xrightarrow{\Phi} &
    \subgroup\left(K_0(\C)\right)\\
    \subgroup\left(K_0(\C)\right) & \xrightarrow{\Psi} &
    \complete(\C) \cap \dense(\C)
  \end{eqnarray*}
  given by $\Phi(\A) = \Image K_0(\A)$ and $\Psi(H) = \A_H$ are
  well-defined.  It follows from Lemma \ref{lem:subcategory} that the
  composition $\Psi \circ \Phi$ is the identity on $\complete(\C) \cap
  \dense(\C)$.  For a subgroup $H$ of $K_0(\C)$, it is easy to see
  that $\Phi \circ \Psi(H)$ is a subgroup of $H$, i.e.\ $\Image
  K_0(\A_H)\leq H$.  But every element of $H$ is of the form $[C]$ for
  some $C\in \C$, hence $C \in \A_H$ and then $[C] \in \Image
  K_0(\A_H)$.  This gives $H\leq \Image K_0(\A_H)$, and so the
  composition $\Phi \circ \Psi$ is the identity on
  $\subgroup\left(K_0(\C)\right)$.
\end{proof}

\section{Tensor $n$-angulated categories}
\label{sec:tensor}
In this final section we briefly discuss the classification theorem in
the context of $n$-angulated categories that admit a symmetric tensor
(or monoidal) structure compatible with the $n$-angulated
structure. As in the previous section, we need to restrict to the case
when $n$ is an odd integer. The Grothendieck group of such a category
becomes a commutative ring, whose ideals classify the complete and
dense $n$-angulated tensor ideals.

Recall that an additive category $\C$ is a \emph{symmetric tensor
  category} (or \emph{symmetric monoidal category}) if there is an
additive bifunctor $\otimes \colon \C \times \C \to \C$ and a unit
object $I \in \C$ satisfying the following axioms:
\begin{enumerate}
\item (Associativity Axiom) There is a natural isomorphism $\alpha
  \colon (- \otimes (- \otimes -)) \to ((- \otimes -) \otimes -)$ of
  functors $\C \times \C \times \C \to \C$.
\item (Unit Axiom) There are natural isomorphisms $\lambda \colon (I
  \otimes -) \to (-)$ and $\rho \colon (- \otimes I) \to (-)$ of
  functors $\C \to \C$.
\item (Pentagon Axiom) The diagram
  \begin{center}
    \begin{tikzpicture}[text centered]
      \node (X1) at (-2,0){$A \otimes ((B \otimes C) \otimes D)$};
      \node (X2) at (2,0){$(A \otimes (B \otimes C)) \otimes D$};
      \node (Y1) at (-3,1.5){$A \otimes (B \otimes (C \otimes D))$};
      \node (Y2) at (3,1.5){$((A \otimes B) \otimes C) \otimes D$};
      \node (Z1) at (0,3){$(A \otimes B) \otimes (C \otimes D)$};
      
      \begin{scope}[font=\scriptsize,->,midway]
        \draw (X1) -- node[above]{$\alpha$} (X2);
        \draw (Y1) -- node[below left]{$1 \otimes \alpha$} (X1);
        \draw (X2) -- node[below right]{$\alpha \otimes 1$} (Y2);
        \draw (Y1) -- node[above left]{$\alpha$} (Z1);
        \draw (Z1) -- node[above right]{$\alpha$} (Y2);       
      \end{scope}
    \end{tikzpicture}
  \end{center}
  commutes for all objects $A,B,C,D \in \C$.
\item (Triangle Axiom) The diagram
  \begin{center}
    \begin{tikzpicture}[text centered]
      \node (X1) at (-2,0){$A \otimes (I \otimes B)$};
      \node (X2) at (2,0){$(A \otimes I) \otimes B$};
      \node (Y1) at (0,1.5){$A \otimes B$};
      
      \begin{scope}[font=\scriptsize,->,midway]
        \draw (X1) -- node[above]{$\alpha$} (X2);
        \draw (X1) -- node[above left]{$1 \otimes \lambda$} (Y1);
        \draw (X2) -- node[above right]{$\rho \otimes 1$} (Y1);
      \end{scope}
    \end{tikzpicture}
  \end{center}
  commutes for all objects $A,B \in \C$.
\item (Symmetry Axiom) For all objects $A,B \in \C$ there is an
  isomorphism $\gamma \colon A \otimes B \to B \otimes A$, natural in
  both $A$ and $B$.  The three diagrams
  \begin{center}
    \begin{tikzpicture}[text centered]
      \node (X1) at (-3,0){$A \otimes B$};
      \node (X2) at (0,0){$A \otimes B$};
      \node (X3) at (-1.5,1.5){$B \otimes A$};
      
      \node (Y1) at (2,0){$I \otimes A$};
      \node (Y2) at (5,0){$A$};
      \node (Y3) at (3.5,1.5){$A \otimes I$};

      \node (Z1) at (-2.5,-1.5){$A \otimes (B \otimes C)$};
      \node (Z2) at (1,-1.5){$(A \otimes B) \otimes C$};
      \node (Z3) at (4.5,-1.5){$C \otimes (A \otimes B)$};
      \node (Z4) at (-2.5,-3){$A \otimes (C \otimes B)$};
      \node (Z5) at (1,-3){$(A \otimes C) \otimes B$};
      \node (Z6) at (4.5,-3){$(C \otimes A) \otimes B$};
      
      \begin{scope}[font=\scriptsize,->,midway]
        \draw (X1) -- node[above]{$1$} (X2);
        \draw (X1) -- node[above left]{$\gamma$} (X3);
        \draw (X3) -- node[above right]{$\gamma$} (X2);
        
        \draw (Y1) -- node[above]{$\lambda$} (Y2);
        \draw (Y1) -- node[above left]{$\gamma$} (Y3);
        \draw (Y3) -- node[above right]{$\rho$} (Y2);
      
        \draw (Z1) -- node[above]{$\alpha$} (Z2);
        \draw (Z2) -- node[above]{$\gamma$} (Z3);
        \draw (Z3) -- node[right]{$\alpha$} (Z6);
        \draw (Z1) -- node[left]{$1 \otimes \gamma$} (Z4);
        \draw (Z4) -- node[above]{$\alpha$} (Z5);
        \draw (Z5) -- node[above]{$\gamma \otimes 1$} (Z6);
      \end{scope}
    \end{tikzpicture}
  \end{center}
  commute for all objects $A,B,C \in \C$.
\end{enumerate}

For further details, we refer to \cite[VII.1 and VII.7]{MacLane}.
Strictly speaking, one should refer to the symmetric tensor category
$\C$ as the tuple $(\C, \otimes, I, \alpha, \lambda, \rho, \gamma)$.
However, we shall keep the notation at a minimum and only refer to
``the symmetric tensor category $\C$.''

Now let $(\C,\Sigma)$ be an $n$-angulated category.  Then $\C$ is
\emph{tensor $n$-angulated} if it admits a symmetric tensor structure
which is compatible with the $n$-angulated structure.  Specifically,
this means that $\C$ is also a symmetric tensor category $(\C,
\otimes, I, \alpha, \lambda, \rho, \gamma)$ satisfying the following
axioms:
\begin{enumerate}
\item There are natural isomorphisms $l \colon (- \otimes ( \Sigma -))
  \to \Sigma (- \otimes -)$ and $r \colon ( ( \Sigma -) \otimes -) \to
  \Sigma (- \otimes -)$ of functors $\C \times \C \to \C$.
\item For every object $A \in \C$, the endofunctors $(A \otimes -)$
  and $(- \otimes A)$ on $\C$ are $n$-angulated functors, together
  with the natural isomorphisms $l$ and $r$, respectively.
\item The two diagrams
  \begin{center}
    \begin{tikzpicture}[text centered]
      \node (X1) at (-3,0){$I \otimes \Sigma A$};
      \node (X2) at (0,0){$\Sigma A$};
      \node (X3) at (-1.5,1.5){$\Sigma (I \otimes A)$};
      
      \node (Y1) at (2,0){$\Sigma A \otimes I$};
      \node (Y2) at (5,0){$\Sigma A$};
      \node (Y3) at (3.5,1.5){$\Sigma (A \otimes I)$};
      
      \begin{scope}[font=\scriptsize,->,midway]
        \draw (X1) -- node[above]{$\lambda$} (X2);
        \draw (X1) -- node[above left]{$l$} (X3);
        \draw (X3) -- node[above right]{$\Sigma \lambda$} (X2);
        
        \draw (Y1) -- node[above]{$\rho$} (Y2);
        \draw (Y1) -- node[above left]{$r$} (Y3);
        \draw (Y3) -- node[above right]{$\Sigma \rho$} (Y2);
        \end{scope}
    \end{tikzpicture}
  \end{center}
  commute for every object $A \in \C$.
\item The diagram
  \begin{center}
    \begin{tikzpicture}[text centered]
      \node (X1) at (-3,1.5){$\Sigma A \otimes \Sigma B$};
      \node (X2) at (0,1.5){$\Sigma (A \otimes \Sigma B)$};
      \node (X3) at (-3,0){$\Sigma ( \Sigma A \otimes B)$};
      \node (X4) at (0,0){$\Sigma^2 (A \otimes B)$};
      
      \begin{scope}[font=\scriptsize,->,midway]
        \draw (X1) -- node[above]{$r$} (X2);
        \draw (X1) -- node[left]{$l$} (X3);
        \draw (X2) -- node[right]{$\Sigma l$} (X4);
        \draw (X3) -- node[above]{$\Sigma r$} (X4);
        \end{scope}
    \end{tikzpicture}
  \end{center}
  anti-commutes for all objects $A,B \in \C$.
\end{enumerate}

Note that axiom (2) can be reformulated as follows: for every object
$A$ and $n$-angle
\begin{equation*}
  A_1 \xrightarrow{\alpha_1} A_2 \xrightarrow{\alpha_2} \cdots
  \xrightarrow{\alpha_{n - 1}} A_n \xrightarrow{\alpha_n} \Sigma A_1 
\end{equation*}
in $\C$, the two $n$-$\Sigma$-sequences
\begin{equation*}
  A \otimes A_1 \xrightarrow{1 \otimes \alpha_1} A \otimes A_2
  \xrightarrow{1 \otimes \alpha_2} \cdots \xrightarrow{1 \otimes
    \alpha_{n - 1}} A \otimes A_n \xrightarrow{l \circ (1 \otimes
    \alpha_n)} \Sigma (A \otimes A_1)
\end{equation*}
\begin{equation*}
  A_1 \otimes A \xrightarrow{\alpha_1 \otimes 1} A_2 \otimes A
  \xrightarrow{\alpha_2 \otimes 1} \cdots \xrightarrow{\alpha_{n - 1}
    \otimes 1} A_n \otimes A \xrightarrow{r \circ (\alpha_n \otimes
    1)} \Sigma (A_1 \otimes A)
\end{equation*}
are also $n$-angles in $\C$.  Note also that in the triangulated case,
i.e.\ when $n = 3$, then some authors include further axioms for a
tensor triangulated category, cf.\ \cite[Remark 4]{Balmer} and
\cite{HoveyPalmieriStrickland, KellerNeeman, May}.  However, we will
not need $n$-angulated versions of these axioms.

\begin{lemma}
  \label{lem:ring}
  Let $(\C,\Sigma)$ be a tensor $n$-angulated category, with $n$ an
  odd integer.  Then the Grothendieck group $K_0(\C)$ is a commutative
  ring with multiplication given by $[A][B] = [A \otimes B]$ for
  objects $A,B \in \C$.
\end{lemma}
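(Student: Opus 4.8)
The plan is to transport the tensor product to the level of Grothendieck groups by first endowing the free abelian group $F(\C)$ with a commutative ring structure, and then checking that the subgroup $R(\C)$ of Definition \ref{def:K0} is an ideal, so that the multiplication descends to the quotient $K_0(\C) = F(\C)/R(\C)$.

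First I would note that, since $\otimes$ is an additive bifunctor, it sends pairs of isomorphic objects to isomorphic objects; hence the rule $\langle A \rangle \cdot \langle B \rangle \ceq \langle A \otimes B \rangle$ on the free generators of $F(\C)$ extends uniquely to a biadditive map $F(\C) \times F(\C) \to F(\C)$. Passing to isomorphism classes erases all the coherence data of the symmetric monoidal structure, so the mere existence of the natural isomorphisms $\alpha$, $\gamma$, $\lambda$, $\rho$ immediately gives the equalities $\langle A \otimes (B \otimes C) \rangle = \langle (A \otimes B) \otimes C \rangle$, $\langle A \otimes B \rangle = \langle B \otimes A \rangle$, and $\langle I \otimes A \rangle = \langle A \rangle = \langle A \otimes I \rangle$ in $F(\C)$. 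Therefore this product is associative and commutative with two-sided identity $\langle I \rangle$; that is, $F(\C)$ is a commutative ring, and no diagram chasing is needed here.

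Next I would show that $R(\C)$ is an ideal of $F(\C)$. Since $n$ is odd, $R(\C)$ is generated as an abelian group by the elements $\chi(A_\bullet)$ for $n$-angles $A_\bullet$, while $F(\C)$ is generated as an abelian group by the classes $\langle B \rangle$, $B \in \C$. By biadditivity it therefore suffices to check that $\langle B \rangle \cdot \chi(A_\bullet) \in R(\C)$ for each object $B$ and each $n$-angle $A_\bullet \colon A_1 \xrightarrow{\alpha_1} A_2 \to \cdots \to A_n \xrightarrow{\alpha_n} \Sigma A_1$. Expanding, $\langle B \rangle \cdot \chi(A_\bullet) = \sum_{i=1}^{n} (-1)^{i+1} \langle B \otimes A_i \rangle$, and because the Euler relation depends only on the objects of a sequence, this is exactly $\chi$ of the $n$-$\Sigma$-sequence
\begin{equation*}
  B \otimes A_1 \xrightarrow{1 \otimes \alpha_1} \cdots \xrightarrow{1 \otimes \alpha_{n-1}} B \otimes A_n \xrightarrow{l \circ (1 \otimes \alpha_n)} \Sigma(B \otimes A_1).
\end{equation*}
By axiom (2) of a tensor $n$-angulated category, in the reformulated form recorded just before the lemma, this sequence is an $n$-angle in $\C$, so $\langle B \rangle \cdot \chi(A_\bullet) \in R(\C)$. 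As $F(\C)$ is commutative, this also gives $\chi(A_\bullet) \cdot \langle B \rangle \in R(\C)$, and hence $R(\C)$ is a two-sided ideal.

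Finally, the ring structure on $F(\C)$ then descends to $K_0(\C) = F(\C)/R(\C)$, which becomes a commutative ring with identity $[I]$ and with multiplication $[A][B] = \langle A \otimes B \rangle + R(\C) = [A \otimes B]$, as claimed. The only step that uses the compatibility axioms of a tensor $n$-angulated category, rather than merely the symmetric monoidal axioms, is the verification that $R(\C)$ is an ideal; this is the crux of the argument, and it is precisely where one needs each $n$-angle to remain an $n$-angle after tensoring with a fixed object. Everything else is formal, so I do not anticipate a serious obstacle.
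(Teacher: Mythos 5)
Your proof is correct, but it takes a different route from the paper. The paper checks directly that the multiplication $[A][B]=[A\otimes B]$ is well defined on $K_0(\C)$: given $[B]=[B']$, it invokes Corollary \ref{cor:equality} to produce two $n$-angles with first vertices $B\oplus C_1$ and $B'\oplus C_1$ and common middle terms, tensors them with $A$ (using the reformulated axiom (2)), and applies Corollary \ref{cor:equality} once more to conclude $[A\otimes B]=[A\otimes B']$; associativity, commutativity, unit and distributivity are then read off from the monoidal axioms, just as in your last paragraph. You instead make $F(\C)$ a commutative ring via $\langle A\rangle\cdot\langle B\rangle=\langle A\otimes B\rangle$ and show that $R(\C)$ is an ideal, because $\langle B\rangle\cdot\chi(A_\bullet)$ is precisely the Euler relation of the tensored $n$-angle; the ring structure then descends to the quotient. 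This is a clean and arguably more economical argument: it bypasses Corollary \ref{cor:equality} entirely, isolates the single place where the compatibility of $\otimes$ with the $n$-angulation is used, and in fact the hypothesis that $n$ is odd enters only through the list of generators of $R(\C)$ --- adding the generator $\langle 0\rangle$ in the even case causes no trouble since $B\otimes 0\cong 0$, so your argument would extend verbatim, whereas the paper's relies on the odd-$n$ description of equality in $K_0(\C)$. What the paper's approach buys in exchange is that it works entirely inside $K_0(\C)$ with the concrete criterion for equality of classes, consistent with how that corollary is reused elsewhere in the paper.
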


\begin{proof}
  Suppose that $A,B$ and $B'$ are objects in $\C$ with $[B] = [B']$ in
  $K_0(\C)$.  We first have to show that the multiplication is
  well-defined, i.e.\ that $[A][B] = [A][B']$.  By Corollary
  \ref{cor:equality}, there exist objects $C_1, \dots, C_n$ and two
  $n$-angles
  \begin{equation*}
    B \oplus C_1 \xrightarrow{\alpha_1} C_2 \xrightarrow{\alpha_2}
    \cdots \xrightarrow{\alpha_{n-1}} C_n \xrightarrow{\alpha_n}
    \Sigma A \oplus \Sigma C_1
  \end{equation*}
  \begin{equation*}
    B' \oplus C_1 \xrightarrow{\beta_1} C_2 \xrightarrow{\beta_2}
    \cdots \xrightarrow{\beta_{n-1}} C_n \xrightarrow{\beta_n}
    \Sigma B \oplus \Sigma C_1
  \end{equation*}
  in $\C$.  Tensoring these with the object $A$ yields two new
  $n$-angles, and together with Corollary \ref{cor:equality} again
  these show that
  \begin{equation*}
    [A][B] = [A \otimes B] = [A \otimes B'] = [A][B']
  \end{equation*}
  in $K_0(\C)$.  The multiplication is therefore well-defined.

  The associativity axiom and symmetry axiom for a symmetric tensor
  category guarantees that the multiplication in $K_0(\C)$ is
  associative and commutative.  Furthermore, by the unit axiom, the
  image $[I]$ in $K_0(\C)$ of the tensor unit object $I \in \C$ is the
  multiplicative identity.  Finally, the equalities
  \begin{equation*}
    [A] \left([B] + [C]\right) = [A][B \oplus C] = [A \otimes (B
    \oplus C)] = [(A \otimes B) \oplus (A \otimes C)] = [A \otimes B]
    + [A \otimes C] = [A][B] + [A][C]
  \end{equation*}
  show that the distributive law holds.
\end{proof}

We may therefore speak of the \emph{Grothendieck ring} $K_0(\C)$ of a
tensor $n$-angulated category $(\C,\Sigma)$.  We end with a tensor
version of Theorem \ref{thm:correspondence}, showing that the set of
ideals in this ring classifies the ``ideals'' in $\C$.  Namely, let
$\A$ be a subcategory of $\C$.  Then $\A$ is an \emph{$n$-angulated
  tensor ideal} of $\C$ if it is an $n$-angulated subcategory with the
following additional property: for objects $A \in \A$ and $C \in \C$
the tensor product $C \otimes A$ belongs to $\A$.  Furthermore, we say
that $\A$ is an \emph{$n$-angulated tensor prime ideal} of $\C$ if it
is an $n$-angulated tensor ideal with the following property: if $C
\otimes C'$ belongs to $\A$ for objects $C,C' \in \C$, then either $C$
or $C'$ belongs to $\A$.

\begin{theorem}
  \label{thm:tensorcorrespondence}
  Let $n \geq 3$ be an odd integer and $(\C,\Sigma)$ a tensor
  $n$-angulated category.  Denote the set of complete $n$-angulated
  tensor ideals of $\C$ by $\complete^{\otimes}(\C)$, the set of dense
  $n$-angulated tensor ideals of $\C$ by $\dense^{\otimes}(\C)$, and
  the set of ideals of $K_0(\C)$ by $\ideal\left(K_0(\C)\right)$.
  Furthermore, denote the set of complete $n$-angulated tensor prime
  ideals of $\C$ by $\complete^{\otimes}_p(\C)$, the set of dense
  $n$-angulated tensor prime ideals of $\C$ by
  $\dense^{\otimes}_p(\C)$, and the set of prime ideals of $K_0(\C)$
  by $\Prime\left(K_0(\C)\right)$.  Then there are one-to-one
  correspondences
  \begin{equation*}
    \begin{array}{r@{\ }c@{\ }l}
      \complete^{\otimes}(\C) \cap \dense^{\otimes}(\C) &
      \longleftrightarrow & \ideal\left(K_0(\C)\right)\\
      \complete^{\otimes}_p(\C) \cap \dense^{\otimes}_p(\C) &
      \longleftrightarrow & \Prime\left(K_0(\C)\right)\\
      \A & \longmapsto & \Image K_0(\A)\\
      \A_H & \longmapsfrom & H
    \end{array}
  \end{equation*}
  where $\A_H$ is the subcategory of $\C$ consisting of those objects
  $A$ in $\C$ such that $[A]\in H \leq K_0(\C)$.
\end{theorem}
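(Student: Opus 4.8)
The plan is to deduce everything from Theorem~\ref{thm:correspondence} by observing that the bijection constructed there restricts to the two correspondences claimed here. By Lemma~\ref{lem:ring}, $K_0(\C)$ is a commutative ring, and Theorem~\ref{thm:correspondence} supplies mutually inverse maps $\A \mapsto \Image K_0(\A)$ and $H \mapsto \A_H$ between $\complete(\C)\cap\dense(\C)$ and $\subgroup(K_0(\C))$. So it suffices to prove: (i) for $\A \in \complete(\C)\cap\dense(\C)$, the subcategory $\A$ is a tensor ideal if and only if the subgroup $\Image K_0(\A)$ is an ideal of $K_0(\C)$; and (ii) under that condition, $\A$ is moreover a tensor prime ideal if and only if $\Image K_0(\A)$ is a prime ideal. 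The tool I will use throughout, besides the multiplicativity $[C][A]=[C\otimes A]$ of Lemma~\ref{lem:ring}, is the following consequence of Lemma~\ref{lem:subcategory}: if $\A$ is complete and dense, then for \emph{every} object $X \in \C$ one has $X \in \A$ exactly when $[X] \in \Image K_0(\A)$. I will also use freely that, since $n$ is odd, every element of $K_0(\C)$ has the form $[C]$ and every element of $\Image K_0(\A)$ the form $[A]$ with $A \in \A$ (Proposition~\ref{prop:elementary-properties}).

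For (i), suppose first that $\A$ is a complete and dense $n$-angulated tensor ideal and set $H = \Image K_0(\A)$. An arbitrary element of $K_0(\C)$ is $[C]$ for some $C \in \C$ and an arbitrary element of $H$ is $[A]$ for some $A \in \A$; their product $[C][A] = [C \otimes A]$ lies in $H$ because $C \otimes A \in \A$ by the tensor-ideal property. Hence $K_0(\C)\cdot H \subseteq H$, so $H$ is an ideal. Conversely, if $H$ is an ideal, then $\A_H$ is a complete and dense $n$-angulated subcategory by Lemma~\ref{lem:subgroup}; and for $A \in \A_H$, i.e.\ $[A] \in H$, and any $C \in \C$ we get $[C \otimes A] = [C][A] \in H$, so $C \otimes A \in \A_H$, showing $\A_H$ is a tensor ideal. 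Since the two composites of $\A \mapsto \Image K_0(\A)$ and $H \mapsto \A_H$ are already the identity maps on the larger sets, this gives the first asserted bijection.

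For (ii), add the hypothesis that $\A$ is a tensor prime ideal, so in particular $\A \neq \C$, and again write $H = \Image K_0(\A)$. First, $H$ is a proper ideal: choosing $X \in \C \setminus \A$, the dictionary entry gives $[X] \notin H$, so $H \neq K_0(\C)$. Next, if $[C][C'] \in H$, then $[C \otimes C'] \in H$, hence $C \otimes C' \in \A$, and primeness of $\A$ forces $C \in \A$ or $C' \in \A$, that is $[C] \in H$ or $[C'] \in H$; so $H$ is prime. Conversely, if $H$ is a prime ideal of $K_0(\C)$, then $\A_H \neq \C$ (else $H = K_0(\C)$), and $C \otimes C' \in \A_H$ gives $[C][C'] = [C \otimes C'] \in H$, whence $[C] \in H$ or $[C'] \in H$, i.e.\ $C \in \A_H$ or $C' \in \A_H$; so $\A_H$ is a tensor prime ideal. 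Thus the bijection of Theorem~\ref{thm:correspondence} restricts also to the stated bijection between $\complete^{\otimes}_p(\C) \cap \dense^{\otimes}_p(\C)$ and $\Prime(K_0(\C))$.

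Almost all the substance is inherited: that $\A_H$ is a complete and dense $n$-angulated subcategory comes from Lemma~\ref{lem:subgroup}, and that the correspondence is bijective comes from Theorem~\ref{thm:correspondence}; what remains are the four one-line verifications above. I do not anticipate a serious obstacle, but the step to handle most carefully is the interchangeability of ``$X \in \A$'' and ``$[X] \in \Image K_0(\A)$'' for objects $X$ not assumed to lie in $\A$ — this is precisely Lemma~\ref{lem:subcategory}, and it is what makes both the tensor-ideal translation and, via $X = C \otimes C'$, the primeness translation go through.
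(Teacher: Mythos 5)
Your proposal is correct and follows essentially the same route as the paper: reduce to Theorem~\ref{thm:correspondence}, then translate the tensor (prime) ideal conditions back and forth using the multiplicativity $[C][A]=[C\otimes A]$ of Lemma~\ref{lem:ring} together with the dictionary ``$X\in\A$ iff $[X]\in\Image K_0(\A)$'' from Lemma~\ref{lem:subcategory}. Your extra remark on properness (matching $\A\neq\C$ with $H\neq K_0(\C)$ in the prime case) is a harmless refinement the paper leaves implicit.
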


\begin{proof}
  In view of Theorem \ref{thm:correspondence}, it suffices to show
  that the given correspondences map complete and dense $n$-angulated
  tensor (prime) ideals of $\C$ to (prime) ideals of the Grothendieck
  ring, and vice versa.

  Let $H$ be an ideal in $K_0(\C)$, and consider the subcategory
  $\A_H$ of $\C$.  We know from Theorem \ref{thm:correspondence} that
  $\A_H$ is a complete and dense $n$-angulated subcategory of $\C$.
  If $A \in \A_H$ and $C \in \C$, then by definition $[A]$ belongs to
  $H$ in $K_0(\C)$.  Since $H$ is an ideal, the element $[C][A]$ also
  belongs to $H$, i.e.\ $[C \otimes A] \in H$.  But then $C \otimes A$
  belongs to $\A_H$, hence $\A_H$ is an $n$-angulated tensor ideal of
  $\C$.  Suppose now that $H$ is a prime ideal, and let $C,C' \in \C$
  be objects with $C \otimes C' \in \A_H$.  Then $[C \otimes C']$
  belongs to $H$ in $K_0(\C)$, i.e.\ $[C][C'] \in H$.  Since $H$ is a
  prime ideal, either $[C]$ or $[C']$ belongs to $H$, hence either $C$
  or $C'$ belongs to $\A_H$.  This shows that $\A_H$ is an
  $n$-angulated tensor prime ideal of $\C$.

  Conversely, let $\A$ be a complete and dense $n$-angulated tensor
  ideal of $\C$, and consider the subgroup $\Image K_0(\A)$ of
  $K_0(\C)$.  Let $[C]$ and $[A]$ be elements in $K_0(\C)$ with
  $[A] \in \Image K_0(\A)$.  Using Theorem \ref{thm:correspondence}
  again, we see that the object $A$ then belongs to $\A$.  Since $\A$
  is an $n$-angulated tensor ideal of $\C$, the object $C \otimes A$
  also belongs to $\A$, hence $[C \otimes A] \in \Image K_0(\A)$ in
  $K_0(\C)$. But then $[C][A] \in \Image K_0(\A)$, hence $\Image
  K_0(\A)$ is an ideal of $K_0(\C)$.  Suppose now that $\A$ is an
  $n$-angulated tensor prime ideal of $\C$, and let $[C],[C'] \in
  K_0(\C)$ be elements with $[C][C'] \in \Image K_0(\A)$.  Then $[C
  \otimes C']$ belongs to $\Image K_0(\A)$, and as above this implies
  that the object $C \otimes C'$ belongs to $\A$.  Since $\A$ is an
  $n$-angulated tensor prime ideal of $\C$, either $C$ or $C'$ belongs
  to $\A$, hence either $[C]$ or $[C']$ belongs to $\Image K_0(\A)$ in
  $K_0(\C)$.  This shows that $\Image K_0(\A)$ is a prime ideal of
  $K_0(\C)$.
\end{proof}

\bibliographystyle{plain}

\end{document}